\documentclass[12pt]{amsart}
\usepackage{amsmath}
\usepackage{amsfonts}
\usepackage{amsthm}
\usepackage{amssymb}
\usepackage{amscd}
\usepackage[all]{xy}
\usepackage{enumerate}

\textheight22truecm
\textwidth17truecm
\oddsidemargin-0.5truecm
\evensidemargin-0.5truecm

\keywords{Extension class of a vector bundle, torsion freeness, Castelnuovo's free pencil trick,
infinitesimal Torelli problem, projective hypersurface, meromorphic forms.} 

\subjclass[2010]{14C34, 14D07, 14J10, 14J40, 14J70}

\pagestyle{myheadings}

\theoremstyle{plain}
\newtheorem{thm}{Theorem}[subsection]

\newtheorem{prop}[thm]{Proposition}

\newtheorem{cor}[thm]{Corollary}

\newtheorem{lem}[thm]{Lemma}

\theoremstyle{definition}
\newtheorem{defn}[thm]{Definition}

\newtheorem{rmk}[thm]{Remark}

\newcommand{\sB}{\mathcal{B}}

\newcommand{\sE}{\mathcal{E}}
\newcommand{\sF}{\mathcal{F}}
\newcommand{\sG}{\mathcal{G}}

\newcommand{\sI}{\mathcal{I}}
\newcommand{\sJ}{\mathcal{J}}

\newcommand{\sL}{\mathcal{L}}

\newcommand{\sM}{\mathcal{M}}
\newcommand{\sO}{\mathcal{O}}

\newcommand{\sV}{\mathcal{V}}

\newcommand{\mC}{\mathbb{C}}

\newcommand{\mP}{\mathbb{P}}

\newcommand{\Ima}{\mathrm{Im}\,}

\newcommand{\rank}{\mathrm{rank}\,}

\numberwithin{equation}{section}

\newcommand{\beba}  {\begin{equation}\begin{array}{rcl}}

\newcommand{\eaee}  {\end{array}\end{equation}}

\setcounter{tocdepth}{3}

\let\oldtocsection=\tocsection

\let\oldtocsubsection=\tocsubsection

\let\oldtocsubsubsection=\tocsubsubsection

\renewcommand{\tocsection}[2]{\hspace{0em}\oldtocsection{#1}{#2}}
\renewcommand{\tocsubsection}[2]{\hspace{1em}\oldtocsubsection{#1}{#2}}
\renewcommand{\tocsubsubsection}[2]{\hspace{2em}\oldtocsubsubsection{#1}{#2}}

\setcounter{tocdepth}{4}


\title{Generalized Adjoint Forms on Algebraic Varieties}

\author{Luca Rizzi}
\address{D.I.M.I. \\
the University of Udine\\
Udine, 33100 Italy\\
\texttt{rizzi.luca@spes.uniud.it}}

\author{Francesco Zucconi}

\address{D.I.M.I. \\
the University of Udine\\
Udine, 33100 Italy\\
\texttt{Francesco.Zucconi@dimi.uniud.it}}

\begin{document}

\markboth{Rizzi and Zucconi}{Generalized Adjoint Forms on Algebraic Varieties}

\begin{abstract} We prove a full  generalization of the Castelnuovo's free pencil trick. We show its analogies with \cite[Theorem 2.1.7]{RZ1}; see also \cite[Theorem 1.5.1]{PZ}. Moreover we find a new formulation 
of the Griffiths's infinitesimal Torelli Theorem for smooth projective hypersurfaces using meromorphic $1$-forms.
\end{abstract}

\maketitle
\tableofcontents

\section{Introduction}
Let $X$ be an $m$-dimensional smooth projective variety and $\sF$ be a rank $n$ locally free sheaf over it.  A way to study $\sF$ is to study its extensions $0\to\sL\to \sE\to\sF\to 0$ which,  up to isomorphism,  are parametrized by $\text{Ext}^1(\sF,\sL)$.  In \cite{CP}, \cite{PZ}, \cite{RZ1}, \cite{Ra}, \cite{PR}, \cite{CNP}, \cite{victor1}, \cite{victor2} and \cite{BGN} the adjoint forms associated to $\xi\in {\rm{Ext}}^{1}(\sO_X, \sF)$ are deeply studied and many applications are given. Let us recall the notion of adjoint form in the case $\sL=\sO_X$.

Given  $\xi\in {\rm{Ext}}^{1}(\sO_X, \sF)$, take an $(n+1)$-dimensional subspace $W$ of the kernel of the cup-product homomorphism $\partial_{\xi}\colon H^0(X,\sF)\to H^1(X,\sO_X)$. Denote by $\lambda^{i}W$ the image of $\bigwedge^iW$ through the natural homomorphism $\lambda^i\colon\bigwedge^i H^0(X,\sF)\to  H^0(X,\bigwedge^{i}\sF)$.
If $\sB:=\langle\eta_{1},\ldots,\eta_{n+1}\rangle$ is a basis of $W$ and $s_1,\ldots, s_{n+1}\in H^0(X,\sE)$ are liftings of  $\eta_{1},\ldots,\eta_{n+1}$ respectively, then the map $\Lambda^{n+1}\colon\bigwedge^{n+1} H^0(X,\sE)\to  H^0(X,\bigwedge^{n+1}\sE)$ gives
 the top form $ \Omega:=\Lambda^{n+1}(s_1\wedge s_2 \wedge\ldots\wedge s_{n+1})\in H^0(X,\det\sE)$. The section $\Omega$ corresponds to a top form $\omega_{\xi,W,{\widehat\sB}}\in H^0(X,\det\sF )$ via the isomorphism $\det\sF \simeq\det\sE$, where ${\widehat\sB}=\langle s_1,\ldots, s_{n+1}\rangle$; the form  $\omega_{\xi,W,{\widehat\sB}}$ is called {\it{an adjoint form of $W$ and $\xi$.}}  To the basis $\sB$ there are also naturally associated $n+1$ elements $\omega_i:=\lambda^{n}(\eta_1\wedge\ldots\wedge\eta_{i-1}\wedge{\widehat{\eta_i}}\wedge\eta_{i+1}\wedge\ldots\wedge\eta_{n+1})$, $i=1,\ldots,n+1$ obtained by the basis 
 $\langle \eta_1\wedge\ldots\wedge\eta_{i-1}\wedge{\widehat{\eta_i}}\wedge\eta_{i+1}\wedge\ldots\wedge\eta_{n+1}\rangle_{i=1}^{n+1}$ of $\bigwedge^n W$.  Note that if we change the liftings $s_1,\ldots, s_{n+1}\in H^0(X,\sE)$ with other liftings ${\widetilde s_1},\ldots, {\widetilde s_{n+1}}$ then  $\omega_{\xi,W,\widehat \sB}$ is a linear combination of $\omega_{\xi,W,\widetilde \sB}$ and $\omega_1$,\ldots, $\omega_{n+1}$. The natural problem of this theory is to characterize the condition $\omega_{\xi,W,\widehat \sB}\in \lambda^nW$ in terms of the fixed divisor $D_W$ of $|\lambda^nW|\subset \mathbb P H^0(X,\det\sF)$ and of the base locus $Z_W$ of the moving part $M_W\in \mathbb P H^0(X,\det\sF\otimes_{\sO_X}\sO_{X}(-D_W))$, where $|\lambda^nW|=D_W+|M_W|$.

 In this paper we consider the general case where $\sL$ is an invertible sheaf not necessarily equal to $\sO_X$. In this case $ \det\sE=\sL\otimes \det\sF$ and liftings  $s_1,\ldots, s_{n+1}\in H^0(X,\sE)$ of $\eta_{1},\ldots,\eta_{n+1}\in H^0(X,\sF)$ determine 
 $\Omega:=\Lambda^{n+1}(s_1\wedge s_2 \wedge\ldots\wedge s_{n+1})\in H^0(X,\det\sE)$ which is now called a generalized adjoint form. We define as before $\omega_i:=\lambda^{n}(\eta_1\wedge\ldots\wedge\eta_{i-1}\wedge{\widehat{\eta_i}}\wedge\eta_{i+1}\wedge\ldots\wedge\eta_{n+1})$, $i=1,\ldots, n+1$ and we characterize the case where
 $\Omega$ belongs to the image of $H^0(X,\sL)\otimes  \lambda^nW\to  H^0(X,\det\sE)$ by the natural tensor product map. The game is more complicated than in the above mentioned papers because the linear system $|\lambda^nW|$ is inside $\mathbb P H^0(X,\det\sF)$ and we have to relate the fixed divisor $D_W$ of $|\lambda^nW|$ and the base locus $Z_W$ of the moving part $M_W\in \mathbb P H^0(X,\det\sF\otimes_{\sO_X}\sO_{X}(-D_W))$ to forms which are not anymore inside $H^0(X,\det\sF)$. Nevertheless the result is analogue to the one of \cite[Theorem 1.5.1]{PZ} and \cite[2.1.7]{RZ1}:
 \medskip
 
\noindent
 {\bf{Theorem [A]}} 
 {\it{ 
 Let $X$ be an $m$-dimensional complex compact smooth variety. 
 Let $\sF$ be a rank $n$ locally free sheaf on $X$ and $\sL$ an invertible sheaf. 
 Consider an extension $0\to\sL\to\mathcal{E}\to\mathcal{F}\to0$ corresponding to $\xi\in \text{Ext}^1(\sF,\sL)$. 
 Let $W=\langle \eta_1,\ldots,\eta_{n+1}\rangle$ be an $n+1$-dimensional sublinear system of $\ker({\partial_\xi})\subset H^0(X,\mathcal{F})$. 
 Let $\Omega\in H^0(X,\det\sE)$ be  an adjoint form associated to $W$ as above.
It holds that if $\Omega \in \Ima(H^0(X,\sL)\otimes \lambda^nW\to H^0(X,\det\sE))$ 
then $\xi\in\ker(H^1(X,\sF^\vee\otimes\sL)\to H^1(X,\sF^\vee\otimes\sL(D_W)))$.
}}
\medskip

Theorem [A], called Adjoint Theorem, can be thought as a general version of the well-known Castelnuovo's free pencil trick; c.f. see Theorem \ref{rango1}. 

We have also a viceversa of the Adjoint Theorem; see: Theorem \ref{inversoaggiunta}:

 \medskip

\noindent
 {\bf{Theorem [B]}} 
 {\it{ Under the same hypothesis of Theorem [A], assume also that $H^0(X,\sL)\cong H^0(X,\sL(D_W))$.
  It holds that  if $\xi\in\ker(H^1(X,\sF^\vee\otimes\sL)\to H^1(X,\sF^\vee\otimes\sL(D_W)))$, then $\Omega \in \Ima(H^0(X,\sL)\otimes \lambda^nW\to H^0(X,\det\sE))$.
 }}
  \medskip

 In particular in the case $D_W=0$ Theorem [B] is a full characterization of the condition  $\Omega \in \Ima(H^0(X,\sL)\otimes \lambda^nW\to H^0(X,\det\sE))$.

 Now by the Adjoint Theorem and by Theorem [B] we can study extension classes of sheaves via adjoint forms. Indeed even if $\sF$ has no global sections we can always take the tensor product with a sufficiently ample linear system $\sM$ such that $\sF\otimes\sM$ has enough global sections in order to apply the theory of adjoint forms. By applying the above idea to the case where $n>2$, $X\subset\mathbb P^n$ is an hypersurface of degree $d>3$ and $\sF:=\Omega^{1}_{X}\otimes_{\sO_X}\sO_X(2)$ we have a reformulation of the infinitesimal Torelli Theorem for $X$ in the setting of generalized adjoint theory. In this paper we will not recall the theory concerning infinitesimal Torelli Theorems, for which a reference is \cite{Vo2}, in any case a quick introduction to this topic is also given in \cite{RZ1}. Here we point out only that given a degree $d$ form $F\in \mC[\xi_0,\ldots,\xi_n]$ the Jacobian ideal of $F$ is the ideal $\sJ$ generated by the partial derivatives $\frac{\partial{F}}{\partial{\xi_i}}$ for $i=0,\ldots,n$ and by \cite{Gri1}[Theorem 9.8], any infinitesimal deformation $\xi\in H^1(X,\Theta_X)$, where $X=(F=0)$ and $\Theta_X$ is the sheaf of tangent vectors on $X$, is given by a class $[R]$  in the quotient $\mC[\xi_0,\ldots,\xi_n]/\sJ$ where $R$ is a homogeneous form of degree $d$.
  \medskip

\noindent
{\bf{Theorem [C]}} 
{\it{
For a smooth hypersurface $X$ of degree $d$ in $\mP^n$ with $n\geq3$ and $d>3$ the following are equivalent:
\begin{itemize}
	\item[\it{i})] the differential of the period map  is zero on the infinitesimal deformation $$[R]\in( \mC[\xi_0,\ldots,\xi_n]/\sJ)_{d}\simeq H^1(X,\Theta_X)$$
	\item[\it{ii})] $R$ is an element of the Jacobian ideal $\sJ$
	\item[\it{iii})] $\Omega \in \Ima(H^0(X,\sO_X(2))\otimes \lambda^nW \to H^0(X,\sO_X(n+d-1)))$ for the generic generalized adjoint $\Omega$
	\item[\it{iv})] The generic generalized adjoint $\Omega$ lies in $\sJ$.
\end{itemize}
}}
\medskip

Note that Theorem [C] has a different flavor with respect to the analogue \cite[Theorem 9.8]{Gri1} since we essentially use meromorphic $1$-forms over $X$; see Proposition \ref{dachiamare}. Finally we want to mention that in a forthcoming paper \cite{RZ2} we show how to recover also the Green's infinitesimal Torelli Theorem for a sufficiently ample divisor of a smooth variety in terms of generalized adjoint theory.

\section{The theory of generalized adjoint forms}
\subsection{Definition of generalized adjoint form}
Let $X$ be a smooth compact complex variety of dimension $m$ and let $\sF$ and $\sL$ be two locally free sheaves on $X$ of rank $n$ and $1$ respectively. Consider the exact sequence of locally free sheaves
\begin{equation}
\label{sequenza}
0\to\sL\to \sE\to\sF\to 0
\end{equation} associated to an element $\xi\in \text{Ext}^1(\sF,\sL)\cong H^1(X,\sF^\vee\otimes\sL)$. Recall that the invertible sheaf $\det\sF:=\bigwedge^n\sF$ fits into the exact sequence
\begin{equation}
\label{wedge}
0\to\bigwedge^{n-1}\sF\otimes\sL\to \bigwedge^n\sE\to \det\sF\to 0, 
\end{equation} which still corresponds to $\xi$ under the isomorphism $\text{Ext}^1(\sF,\sL)\cong\text{Ext}^1(\det\sF,\bigwedge^{n-1}\sF\otimes\sL)\cong H^1(X,\sF^\vee\otimes\sL)$. Furthermore $\det\sF$ satisfies
\begin{equation}
\label{isom}
\det\sF\otimes\sL\cong\det\sE.
\end{equation} Let $\partial_\xi \colon H^0(X,\sF)\to H^1(X,\sL)$ be the connecting homomorphism related to (\ref{sequenza}), and let $W\subset \ker(\partial_\xi)$ be a vector subspace of dimension $n+1$. Choose a basis $\mathcal{B}:=\{\eta_1,\ldots,\eta_{n+1}\}$ of $W$. By definition we can take liftings $s_1,\ldots,s_{n+1}\in H^0(X,\sE)$ of the sections $\eta_1,\ldots,\eta_{n+1}$. If we consider the natural map
\begin{equation*}
\Lambda^n\colon \bigwedge^{n}H^0(X,\sE)\to H^0(X,\bigwedge^n\sE)
\end{equation*} we can define the sections
\begin{equation}
\label{Omegai}
\Omega_i:=\Lambda^n(s_1\wedge\ldots\wedge\hat{s_i}\wedge\ldots\wedge s_{n+1})
\end{equation} for $i=1,\ldots,n+1$. Denote by $\omega_i$, for $i=1,\ldots,n+1$, the corresponding sections in $H^0(X,\det\sF)$. Obviously we have that $\omega_i=\lambda^n(\eta_1\wedge\ldots\wedge\hat{\eta_i}\wedge\ldots\wedge \eta_{n+1})$, where $\lambda^n$ is the natural morphism
\begin{equation*}
\lambda^n\colon \bigwedge^{n}H^0(X,\sF)\to H^0(X,\det\sF).
\end{equation*} The vector subspace of $H^0(X,\det\sF)$ generated by $\omega_1,\ldots,\omega_{n+1}$ is denoted by $\lambda^nW$.

\begin{defn}
If $\lambda^nW$ is nontrivial, it induces a sublinear system $|\lambda^n W|\subset \mP(H^0(X,\det\sF))$ that we will call \emph{adjoint sublinear system}. We call $D_W$ its fixed divisor and $Z_W$ the base locus of its moving part $|M_W|\subset\mP(H^0(X,\det\mathcal{F}(-D_W)))$.
\end{defn}

\begin{defn}
The section $\Omega\in H^0(X,\det\sE)$ corresponding to $s_1\wedge\ldots\wedge s_{n+1}$ via 
\begin{equation}
\Lambda^{n+1}\colon \bigwedge^{n+1}H^0(X,\sE)\to H^0(X,\det\sE)
\end{equation} is called generalized adjoint form.
\end{defn} 
\begin{rmk}
\label{zeri}
It is easy to see by local computation that this section is in the image of the natural injection $\det\sE(-D_W)\otimes\sI_{Z_W}\to \det\sE$.
\end{rmk}

We want to study the condition
\begin{equation}
\label{aggiuntazero1}
\Omega \in \Ima(H^0(X,\sL)\otimes \left\langle \Omega_i\right\rangle\to H^0(X,\det\sE))
\end{equation} or, equivalently,
\begin{equation}
\label{aggiuntazero2}
\Omega \in \Ima(H^0(X,\sL)\otimes \lambda^nW \to H^0(X,\det\sE)).
\end{equation} The first map is given by the wedge product, the second one by (\ref{isom}). Note that if $H^0(X,\sL)=0$ this condition is equivalent to $\Omega=0$.

\begin{rmk}
The choice of the liftings is not relevant for this purpose. Take different liftings $s_1',\ldots,s_{n+1}'\in H^0(X,\sE)$ of $\eta_1,\ldots,\eta_{n+1}$ and call $\Omega_i'\in H^0(X,\bigwedge^n\sE)$ and $\Omega'\in H^0(X,\det\sE)$ the corresponding sections constructed as above. Obviously 
\begin{equation}
\Ima(H^0(X,\sL)\otimes \left\langle \Omega_i\right\rangle\to H^0(X,\det\sE))=\Ima(H^0(X,\sL)\otimes \left\langle \Omega_i'\right\rangle\to H^0(X,\det\sE)),
\end{equation} since they are both equal to $\Ima(H^0(X,\sL)\otimes \lambda^nW \to H^0(X,\det\sE))$. It is also easy to see that $\Omega\in\Ima(H^0(X,\sL)\otimes \left\langle \Omega_i\right\rangle\to H^0(X,\det\sE))$ iff $\Omega'\in\Ima(H^0(X,\sL)\otimes \left\langle \Omega_i'\right\rangle\to H^0(X,\det\sE))$.
\end{rmk}

\begin{rmk}
Consider another basis $\mathcal{B}':=\{\eta_1',\ldots,\eta_{n+1}'\}$ of $W$ and let $A$ be the matrix of the basis change. 
The sections $s_1',\ldots,s_{n+1}'$ obtained from $s_1,\ldots,s_{n+1}$ through the matrix $A$ are liftings of $\eta_1',\ldots,\eta_{n+1}'$. The section $\Omega':=\Lambda^{n+1}(s_1'\wedge\ldots\wedge s_{n+1}')$ satisfies $\Omega'=\det A\cdot\Omega$. Moreover $\Omega\in\Ima(H^0(X,\sL)\otimes \left\langle \Omega_i\right\rangle\to H^0(X,\det\sE))$ iff $\Omega'\in\Ima(H^0(X,\sL)\otimes \left\langle \Omega_i'\right\rangle\to H^0(X,\det\sE))$.
\end{rmk}

\begin{lem}
\label{sollevamenti}
If $\Omega \in \Ima(H^0(X,\sL)\otimes \left\langle \Omega_i\right\rangle\to H^0(X,\det\sE))$, then we can find liftings 
$\tilde{s_i}\in H^0(X,\mathcal{E})$, $i=1,\ldots,n+1$, such that $\tilde{\Omega}:=\Lambda^{n+1}(\tilde{s_1}\wedge\ldots\wedge \tilde{s}_{n+1})=0$.
\end{lem}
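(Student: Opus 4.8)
The plan is to unwind the hypothesis $\Omega \in \Ima(H^0(X,\sL)\otimes \left\langle \Omega_i\right\rangle\to H^0(X,\det\sE))$ into an explicit relation and then use it to correct the liftings. Write $\Omega = \sum_{i=1}^{n+1} \ell_i \wedge \Omega_i$ for suitable sections $\ell_i\in H^0(X,\sL)$; here the wedge denotes the product $H^0(X,\sL)\otimes H^0(X,\bigwedge^n\sE)\to H^0(X,\det\sE)$ coming from $\sL\otimes\bigwedge^n\sE\to\bigwedge^{n+1}\sE$, which is induced by the inclusion $\sL\hookrightarrow\sE$ in (\ref{sequenza}). The key point is that under this inclusion each $\ell_i$ may be regarded as a section of $\sE$ lifting the zero section of $\sF$; concretely, $\ell_i\wedge\Omega_i = \Lambda^{n+1}\bigl(\ell_i\wedge s_1\wedge\ldots\wedge\hat{s_i}\wedge\ldots\wedge s_{n+1}\bigr)$ once we view $\ell_i\in H^0(X,\sE)$ via the injection.

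Next I would set $\tilde{s}_i := s_i + (-1)^{i-1}\,c\,\ell_i$ for an appropriate choice of constant (or more precisely arrange the signs and a single coefficient so that the telescoping works). Each $\tilde{s}_i$ is again a lifting of $\eta_i$, since $\ell_i$ maps to $0$ in $\sF$. Then expanding $\tilde{\Omega} := \Lambda^{n+1}(\tilde{s}_1\wedge\ldots\wedge\tilde{s}_{n+1})$ multilinearly, all terms containing two or more of the $\ell_j$'s vanish: indeed $\ell_j\wedge\ell_k$ lies in $\bigwedge^2\sL\otim(\text{rest})$ and maps to zero in $\bigwedge^{n+1}\sE$ because $\sL\hookrightarrow\sE$ has rank-one image, so any wedge of two such sections dies. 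Hence $\tilde{\Omega}$ equals $\Omega$ plus the sum of the single-$\ell_j$ terms, and the single-$\ell_j$ term is, up to sign, exactly $c\,\ell_j\wedge\Omega_j$. Choosing $c=-1$ (with the sign convention $(-1)^{j-1}$ absorbing the reordering sign in pulling $\ell_j$ to the front) makes the sum of these cancel $\Omega = \sum_j \ell_j\wedge\Omega_j$, so $\tilde{\Omega}=0$.

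The main obstacle is purely bookkeeping: getting the signs right in the identification $\ell_j\wedge\Omega_j = \pm\,\Lambda^{n+1}(\ell_j\wedge s_1\wedge\ldots\wedge\hat{s_j}\wedge\ldots\wedge s_{n+1})$ and in the multilinear expansion of $\tilde{\Omega}$, so that a uniform choice of coefficient cancels all the first-order terms simultaneously. This is a standard computation once one writes everything in a local frame where $\sL$ is generated by $e_0$ and $\sE$ by $e_0,\ldots,e_n$; the vanishing of the higher-order terms is immediate from $e_0\wedge e_0 = 0$. No global geometry is needed beyond the functoriality already recorded in (\ref{sequenza})--(\ref{isom}).
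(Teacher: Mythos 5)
Your proposal is correct and follows essentially the same route as the paper's own proof: interpret the hypothesis as $\Omega=\sum_i\sigma_i\wedge\Omega_i$ with $\sigma_i\in H^0(X,\sL)$ viewed inside $H^0(X,\sE)$, replace $s_i$ by $s_i$ plus a signed copy of $\sigma_i$ (still a lifting of $\eta_i$), and expand, using that any wedge of two sections coming from the rank-one subsheaf $\sL$ vanishes so only the linear terms survive and cancel $\Omega$. The paper fixes the signs explicitly as $\tilde{s}_i:=s_i+(-1)^{n-i}\sigma_i$, which is exactly the bookkeeping you defer.
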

\begin{proof}
By hypothesis there exist $\sigma_i\in H^0(X,\sL)$ such that
\begin{equation}
\label{ipotesi}
\Omega=\sum^{n+1}_{i=1} \sigma_i\wedge\Omega_i
\end{equation}
We can define new liftings for the element $\eta_i$:
\begin{equation*}
\tilde{s_i}:=s_i+(-1)^{n-i}\sigma_i.
\end{equation*} Now, since
\begin{equation}
\tilde{s_1}\wedge \ldots\wedge \tilde{s}_{n+1}=s_1\wedge \ldots\wedge s_{n+1}-\sum^{n+1}_{i=1}  s_1\wedge\ldots\wedge\hat{s_i}\wedge\ldots\wedge s_{n+1}\wedge\sigma_i,
\end{equation} we immediately deduce $\tilde{\Omega}=0$.
\end{proof}

From the natural map
\begin{equation*}
\sF^\vee\otimes\sL\to\sF^\vee\otimes\sL(D_W)
\end{equation*} we have a homomorphism
\begin{equation*}
H^1(X,\sF^\vee\otimes\sL)\stackrel{\rho}{\rightarrow} H^1(X,\sF^\vee\otimes\sL(D_W));
\end{equation*} we call $\xi_{D_W}=\rho(\xi)$. 

\subsection{Castelnuovo's free pencil trick}
Consider the case where both $\sL$ and $\sF$ are of rank one, while $X$ has dimension $m$. In this case $W=\left\langle \eta_1,\eta_2\right\rangle\subset H^0(X,\sF)$ has dimension two; as usual we choose liftings $s_1,s_2\in H^0(X,\sE)$ of $\eta_1,\eta_2$. Note also that $\omega_1=\eta_2$ and $\omega_2=\eta_1$, in particular $W=\lambda^1W$ so $D_W$ is the fixed part of $W$ and $Z_W$ is the base locus of its moving part. Call $\tilde{\eta}_i	\in H^0(X,\sF(-D_W))$ the sections corresponding to the $\eta_i$'s via $H^0(X,\sF(-D_W))\to H^0(X,\sF)$.
The following lemma is well known and it is the core of the Castelnuovo base point free pencil trick.
\begin{lem}
We have an exact sequence
\begin{equation}
\label{castelnuovo}
0\to\mathcal{F}^\vee(D_W)\stackrel{i}{\rightarrow}\mathcal{O}_X\oplus\mathcal{O}_X\stackrel{\nu}{\rightarrow}\mathcal{F}(-D_W)\otimes \sI_{Z_W}\to 0
\end{equation}
where the morphism $i$ is given by contraction with $-\tilde{\eta}_1$ and $\tilde{\eta}_2$, while $\nu$ is given by evaluation with $\tilde{\eta}_2$ on the first component and $\tilde{\eta}_1$ on the second one.
\end{lem}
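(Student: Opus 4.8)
The statement to prove is the exactness of the three-term sequence
\begin{equation*}
0\to\mathcal{F}^\vee(D_W)\stackrel{i}{\rightarrow}\mathcal{O}_X\oplus\mathcal{O}_X\stackrel{\nu}{\rightarrow}\mathcal{F}(-D_W)\otimes \sI_{Z_W}\to 0,
\end{equation*}
where (in rank one) $\tilde\eta_1,\tilde\eta_2$ are the sections of $\sF(-D_W)$ obtained by removing the common fixed divisor $D_W$ from $\eta_1,\eta_2$, so that they generate a base-point-free-away-from-$Z_W$ pencil whose base scheme is exactly $Z_W$. The plan is to check exactness locally, on an open cover trivializing both $\sF$ and $\sO_X(-D_W)$, and then to globalize.

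First I would fix an affine open $U$ on which $\sF|_U\cong\sO_U$ and $\sO_X(-D_W)|_U\cong\sO_U$, so that $\tilde\eta_1,\tilde\eta_2$ become honest regular functions $f_1,f_2\in\sO_X(U)$ generating (on $U$) the ideal of $Z_W$, and $\sF^\vee(D_W)|_U\cong\sO_U$ as well. Over $U$ the maps become: $i$ sends $1\mapsto(-f_1,f_2)$ (contraction) and $\nu$ sends $(a,b)\mapsto af_2+bf_1$ (evaluation). Then $\nu\circ i=0$ is the identity $-f_1f_2+f_2f_1=0$, which gives the complex property globally. Injectivity of $i$ on $U$ is clear since $(f_1,f_2)\neq(0,0)$ as sections of a sheaf on an integral variety — it is even a subbundle inclusion away from $Z_W$ — hence $i$ is a sheaf injection globally. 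Surjectivity of $\nu$ onto $\sF(-D_W)\otimes\sI_{Z_W}$ is exactly the statement that $f_1,f_2$ generate the ideal sheaf $\sI_{Z_W}$, which holds by the very definition of $Z_W$ as the base locus of the moving part $|M_W|$: after removing $D_W$ the sections $\tilde\eta_1,\tilde\eta_2$ have no common divisorial component, so their common zero scheme is $Z_W$ and the ideal it cuts out is generated by the two of them locally. Finally, exactness in the middle: if $(a,b)\in\sO_U\oplus\sO_U$ satisfies $af_2+bf_1=0$, then since $\sO_X(U)$ is a domain (or, more carefully, using that $f_1,f_2$ form a regular sequence on the complement of $Z_W$, which has codimension $\geq 2$ there) one gets $a=cf_1$, $b=-cf_2$ for a unique $c$, i.e. $(a,b)=i(c)$; this is the Koszul-type relation and is where the hypothesis that $D_W$ is precisely the fixed divisor (so that $f_1,f_2$ have no common factor) is essential.

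The one genuinely delicate point — the main obstacle — is the middle-exactness together with surjectivity at the scheme-theoretic level near $Z_W$: one must know that the relations among $f_1,f_2$ are generated by the trivial Koszul relation and that the ideal they generate is saturated, i.e. equals $\sI_{Z_W}$ and not merely agrees with it away from a locus of higher codimension. For this I would invoke that, having split off the fixed divisor, the pencil $\langle\tilde\eta_1,\tilde\eta_2\rangle$ has base locus of codimension $\geq 2$, so $\{f_1,f_2\}$ is a regular sequence on $U$ (after possibly shrinking), whence the Koszul complex $0\to\sO_U\to\sO_U^{\oplus2}\to\sO_U\to\sO_U/(f_1,f_2)\to 0$ is exact; the sequence (\ref{castelnuovo}) is then the truncation of this Koszul complex twisted by $\sF(-D_W)$, with $(f_1,f_2)=\sI_{Z_W}$ by the maximality of $D_W$. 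Patching these local identifications over the cover, and noting that the transition functions of $\sF^\vee(D_W)$, $\sO_X$, and $\sF(-D_W)$ are compatible with the displayed maps (the contraction and evaluation descriptions are manifestly natural), yields the global exact sequence. I would close by remarking that away from $Z_W$ this is a short exact sequence of vector bundles exhibiting $\sF^\vee(D_W)$ as the kernel of the evaluation $\sO_X^{\oplus2}\to\sF(-D_W)$, which is the classical free pencil trick of Castelnuovo, here recorded in a form that keeps track of both $D_W$ and $Z_W$.
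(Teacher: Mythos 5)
Your proof is correct: the paper itself gives no argument (it records the lemma as ``well known''), and your local Koszul-type verification --- trivialize, note that $\tilde\eta_1,\tilde\eta_2$ have no common divisorial component after splitting off $D_W$, so they form a regular sequence in the (UFD) local rings and generate $\sI_{Z_W}$ by the definition of the scheme-theoretic base locus --- is exactly the standard argument behind Castelnuovo's free pencil trick that the authors are invoking. Two cosmetic points only: since $X$ is merely a compact complex manifold you should work on small analytic (e.g.\ coordinate) open sets rather than ``affine opens'' (the local rings are still regular, hence UFDs, so nothing changes), and in the middle-exactness step the solution should read $(a,b)=(-cf_1,cf_2)=i(c)$, a harmless sign slip.
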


It is easy to see by local computation that sequence (\ref{castelnuovo}) fits into the following commutative diagram
\begin{equation}
\label{diagramma}
\xymatrix {0 \ar[r] &\mathcal{F}^\vee \ar[r] \ar[d]^{\cdot D_W}&\mathcal{E}^\vee \ar[r]\ar[d]^{(-s_1,s_2)} & \sL^\vee \ar[r]\ar[d]^{\Omega}&0\\
0 \ar[r] & \mathcal{F}^\vee(D_W) \ar[r]^-{i} & \mathcal{O}_X\oplus\mathcal{O}_X\ar[r]^-{\nu} & \mathcal{F}(-D_W)\otimes \sI_{Z_W} \ar[r]&0.
}
\end{equation}
The morphism $\sE^\vee\to \mathcal{O}_X\oplus\mathcal{O}_X$ is given by contraction with the sections $-s_1$ and $s_2$, the morphism $\sL^\vee\to \mathcal{F}(-D_W)\otimes \sI_{Z_W}$ by contraction with the adjoint $\Omega$.
We can prove now the following
\begin{thm}
\label{rango1}
Let $X$ be an $m$-dimensional complex compact smooth variety. Let $\sF$, $\sL$ be invertible sheaves on $X$. Consider $\xi\in H^1(X,\mathcal{F}^\vee\otimes\sL)$ associated to the extension (\ref{sequenza}). Define $W=\left\langle \eta_1,\eta_{2}\right\rangle\subset\ker({\partial_\xi})\subset H^0(X,\mathcal{F})$ and $\Omega$ as above.
We have that $\Omega \in \Ima(H^0(X,\sL)\otimes W\to H^0(X,\det\sE))$ if and only if $\xi_{D_W}=0$.
\end{thm}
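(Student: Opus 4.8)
The plan is to read off both conditions from the commutative diagram (\ref{diagramma}) after tensoring it with the invertible sheaf $\sL$; since $\sL$ is a line bundle this keeps the rows exact and the squares commutative, and the two pieces of data to be compared — the condition (\ref{aggiuntazero2}) and the vanishing of $\xi_{D_W}$ — will both turn out to be governed by a single connecting homomorphism of the bottom row. After the twist the top row of (\ref{diagramma}) reads $0\to\sF^\vee\otimes\sL\to\sE^\vee\otimes\sL\to\sO_X\to 0$; this is the sequence obtained by dualizing (\ref{sequenza}) and twisting by $\sL$, so (up to a sign, which is irrelevant here) its connecting homomorphism $H^0(X,\sO_X)\to H^1(X,\sF^\vee\otimes\sL)$ sends $1$ to $\xi$. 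The left vertical arrow becomes the map $H^1(X,\sF^\vee\otimes\sL)\to H^1(X,\sF^\vee\otimes\sL(D_W))$ induced by $\sF^\vee\otimes\sL\hookrightarrow\sF^\vee\otimes\sL(D_W)$, that is, exactly $\rho$; so it carries $\xi$ to $\xi_{D_W}$. The right vertical arrow becomes multiplication by $\Omega$, $\sO_X\to\sF(-D_W)\otimes\sL\otimes\sI_{Z_W}$, which is well defined by Remark \ref{zeri} and sends $1$ to $\Omega$, now regarded as a section of $\det\sE(-D_W)\otimes\sI_{Z_W}$.

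Next I would invoke functoriality of the connecting homomorphism for this morphism of short exact sequences. It gives $\delta_{\mathrm{bot}}(\Omega)=\rho(\xi)=\xi_{D_W}$ (up to sign), where $\delta_{\mathrm{bot}}\colon H^0(X,\sF(-D_W)\otimes\sL\otimes\sI_{Z_W})\to H^1(X,\sF^\vee(D_W)\otimes\sL)$ is the connecting map of the bottom row of (\ref{castelnuovo}) tensored with $\sL$. In particular $\xi_{D_W}=0$ if and only if $\delta_{\mathrm{bot}}(\Omega)=0$. The long exact cohomology sequence of that bottom row,
\[
H^0(X,\sL)\oplus H^0(X,\sL)\xrightarrow{\;\nu_*\;}H^0(X,\sF(-D_W)\otimes\sL\otimes\sI_{Z_W})\xrightarrow{\;\delta_{\mathrm{bot}}\;}H^1(X,\sF^\vee(D_W)\otimes\sL),
\]
then translates this into $\Omega\in\Ima(\nu_*)$.

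It remains to recognize $\Ima(\nu_*)$, viewed inside $H^0(X,\det\sE)$ through the injection $H^0(X,\det\sE(-D_W)\otimes\sI_{Z_W})\hookrightarrow H^0(X,\det\sE)$, as the right-hand side of (\ref{aggiuntazero2}). By construction $\nu$ is evaluation against $\tilde\eta_2$ on the first factor and $\tilde\eta_1$ on the second, so $\nu_*(\sigma_1,\sigma_2)=\sigma_1\tilde\eta_2+\sigma_2\tilde\eta_1$, which maps to $\sigma_1\omega_1+\sigma_2\omega_2$ in $H^0(X,\det\sE)$; since $\omega_1=\eta_2$ and $\omega_2=\eta_1$ span $W$, the image of $\nu_*$ is exactly $\Ima(H^0(X,\sL)\otimes W\to H^0(X,\det\sE))$. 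Both $\Omega$ and all of these sections already lie in $H^0(X,\det\sE(-D_W)\otimes\sI_{Z_W})$ — for $\Omega$ this is Remark \ref{zeri}, and for $\sigma_j\tilde\eta_i$ it is clear because $\tilde\eta_1,\tilde\eta_2$ generate the moving part of $|\lambda^1W|$ and hence vanish on $Z_W$ — and that inclusion is injective, so membership upstairs is equivalent to membership downstairs. Chaining the equivalences $\Omega\in\Ima(H^0(X,\sL)\otimes W\to H^0(X,\det\sE))\iff\Omega\in\Ima(\nu_*)\iff\delta_{\mathrm{bot}}(\Omega)=0\iff\xi_{D_W}=0$ proves the theorem.

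The only genuinely delicate point is the bookkeeping in the second paragraph: one has to be certain that the twisted top row really presents $\xi$ as the image of $1$ under its connecting map (a matter of fixing the duality and sign conventions used to identify $\Ext^1(\sF,\sL)$ with $H^1(X,\sF^\vee\otimes\sL)$), and that the rightmost vertical arrow of the twisted diagram genuinely sends $1$ to $\Omega$ in the sense made precise by Remark \ref{zeri}; once these identifications are locked in, functoriality of connecting homomorphisms does the rest and the argument is formal.
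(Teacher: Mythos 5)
Your proposal is correct and follows essentially the same route as the paper: tensor diagram (\ref{diagramma}) with $\sL$, use commutativity (functoriality of the connecting homomorphisms) to get $\delta(\Omega)=\xi_{D_W}$, and then read off from the long exact sequence of the bottom row that $\xi_{D_W}=0$ is equivalent to $\Omega$ lying in the image of $\nu$ on global sections, which is identified with $\Ima\bigl(H^0(X,\sL)\otimes W\to H^0(X,\det\sE)\bigr)$ via $\tilde{\eta}_1,\tilde{\eta}_2$. The extra care you take with the sign conventions and with the injection $H^0(X,\det\sE(-D_W)\otimes\sI_{Z_W})\hookrightarrow H^0(X,\det\sE)$ only makes explicit what the paper leaves implicit.
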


\begin{proof}
Tensoring (\ref{diagramma}) by $\sL$ and passing to cohomology we have the following diagram
\begin{equation}
\label{diagramma1}
\xymatrix {0 \ar[r] &H^0(\mathcal{F}^\vee\otimes\sL) \ar[r] \ar[d]&H^0(\mathcal{E}^\vee\otimes\sL) \ar[r]\ar[d]^-{(s_1,-s_2)} & \mathbb{C} \ar[r]\ar[d]^{\beta}& H^1(\mathcal{F}^\vee\otimes\sL)\ar[d]^\rho\\
0 \ar[r] & H^0(\mathcal{F}^\vee(D_W)\otimes\sL) \ar[r]^-{i} & H^0(\sL\oplus\sL)\ar[r]^-{\nu} & H^0(\mathcal{F}(-D_W)\otimes \sI_{Z_W}\otimes\sL) \ar[r]^-{\delta}&H^1(\mathcal{F}^\vee(D_W)\otimes\sL).
}
\end{equation}

Obviously $\beta(1)=\Omega$ and, by commutativity, $\delta(\beta(1))=\xi_{D_W}$. We have then $\xi_{D_W}=0$ if and only if $\Omega\in \Ima (H^0(\sL\oplus\sL)\stackrel{\nu}{\rightarrow} H^0(\mathcal{F}(-D_W)\otimes \sI_{Z_W}\otimes\sL))$. Since $\nu$ is given by the sections $\tilde{\eta}_2$ and $\tilde{\eta}_1$, this condition is equivalent to $\Omega \in \Ima(H^0(X,\sL)\otimes W\to H^0(X,\det\sE))$, since $\det\sE=\sF\otimes\sL$.
\end{proof}

\subsection{The Adjoint Theorem}
We go back now to the general case with $\sF$ locally free of rank $n$. By obvious identifications the natural map
\begin{equation*}
\text{Ext}^1(\det\sF,\bigwedge^{n-1}\sF\otimes\sL)\to\text{Ext}^1(\det\sF(-D_W),\bigwedge^{n-1}\sF\otimes\sL)
\end{equation*} gives an extension $\sE^{(n)}$ and a commutative diagram:

\begin{equation}
\label{diagramma2}
\xymatrix { 
&&0\ar[d]&0\ar[d]\\
0 \ar[r] & \bigwedge^{n-1}\mathcal{F}\otimes\sL \ar[r]\ar@{=}[d] &\mathcal{E}^{(n)} \ar[r]^-{\alpha} \ar[d]^-{\psi}& \det\mathcal{F}(-D_W) \ar[d]\ar[r]&0\\
0 \ar[r] &\bigwedge^{n-1}\mathcal{F}\otimes\sL \ar[r] &\bigwedge^{n}\mathcal{E} \ar[r]\ar[d] & \det\mathcal{F} \ar[r]\ar[d]&0 \\
&&\det\mathcal{F}\otimes_{\mathcal{O}_X}\mathcal{O}_{D_W}\ar@{=}[r]\ar[d]&\det\mathcal{F}\otimes_{\mathcal{O}_X}\mathcal{O}_{D_W}\ar[d]\\
&&0&0.
}
\end{equation}



\subsubsection{The proof of the Adjoint Theorem}

By the hypothesis $\Omega \in \Ima(H^0(X,\sL)\otimes \lambda^nW\to H^0(X,\det\sE))$ and by lemma (\ref{sollevamenti}), we can choose liftings $s_i\in H^0(X,\sE)$ of $\eta_i$ with $\Omega=0$. 

Since $D_W$ is the fixed divisor of the linear system $|\lambda^n W|$ and the sections $\omega_i$ generate this linear system, then the $\omega_i$ are in the image of
\begin{equation*}
\det\mathcal{F}(-D_W)\to\det\mathcal{F}, 
\end{equation*} so we can find sections $\tilde{\omega}_i\in H^0(X,\det\mathcal{F}(-D_W))$ such that 
\begin{equation}
\label{d}
\tilde{\omega}_i\cdot d=\omega_i,
\end{equation} where $d$ is a global section of $\sO_X(D_W)$ with $(d)=D_W$. Hence, using the commutativity of (\ref{diagramma2}), we can find liftings $\tilde{\Omega}_i\in H^0(X,\mathcal{E}^{(n)})$ of the sections $\Omega_i$. The evaluation map
\begin{equation*}
\bigoplus_{i=1}^{n+1}\mathcal{O}_X\stackrel{\tilde{\mu}}{\rightarrow}\mathcal{E}^{(n)}
\end{equation*} given by the global sections $\tilde{\Omega}_i$, composed with the map $\alpha$ of (\ref{diagramma2}), induces a map $\mu$ which fits into the following diagram
\begin{equation*}
\xymatrix { &&\bigoplus_{i=1}^{n+1}\mathcal{O}_X\ar[d]^{\tilde{\mu}} \ar@{=}[r]&\bigoplus_{i=1}^{n+1}\mathcal{O}_X\ar[d]^{\mu}\\
0 \ar[r] & \bigwedge^{n-1}\mathcal{F}\otimes\sL \ar[r] &\mathcal{E}^{(n)} \ar[r]^-\alpha & \det\mathcal{F}(-D_W) \ar[r]&0.
}
\end{equation*} We point out that the morphism $\mu$ is given by multiplication by $\tilde{\omega}_i$ on the $i$-th component. The sheaf $\Ima\tilde{\mu}$ is torsion free since it is a subsheaf of the locally free sheaf $\mathcal{E}^{(n)}$. Moreover, since $\Omega=0$, a local computation shows that $\Ima\tilde{\mu}$ has rank one outside $Z_W$.
On the other hand the sheaf $\Ima\mu$ is by definition
\begin{equation*}
\Ima\mu=\det\mathcal{F}(-D_W)\otimes \mathcal{I}_{Z_W}.
\end{equation*}

The morphism 
\begin{equation*}
\alpha\colon\mathcal{E}^{(n)}\to\det\mathcal{F}(-D_W)
\end{equation*} induces a surjective morphism, that we continue to call $\alpha$,
\begin{equation*}
\Ima\tilde{\mu}\stackrel{\alpha}{\rightarrow}\Ima\mu
\end{equation*} between two sheaves that are locally free of rank one outside $Z_W$. This morphism is also injective, because its kernel is a torsion subsheaf of the torsion free sheaf $\Ima\tilde{\mu}$, hence it is trivial.

We have proved that
\begin{equation*}
\Ima\tilde{\mu}\cong\det\mathcal{F}(-D_W)\otimes \mathcal{I}_{Z_W},
\end{equation*} so
\begin{equation*}
\sE^{(n)}\supset(\Ima \tilde{\mu})^{\vee\vee}\cong\det\mathcal{F}(-D_W) .
\end{equation*} This isomorphism gives the splitting
\begin{equation*}
\xymatrix { 0\ar[r] &\bigwedge^{n-1}\mathcal{F}\otimes\sL\ar[r]&\mathcal{E}^{(n)}\ar[r]&
\det\mathcal{F}(-D_W)\ar[r]\ar@/_1.5pc/[l]&0.
}
\end{equation*}
Since $\xi_{D_W}$ is the element of $H^1(X,\mathcal{F}^\vee\otimes\sL(D_W))$ associated to this extension, we conclude that $\xi_{D_W}=0$.

We have proved the Adjoint Theorem.

\subsubsection{An inverse of the Adjoint Theorem}

We prove now an inverse of the Adjoint Theorem.

\begin{thm}
\label{inversoaggiunta}Let $X$ be an $m$-dimensional complex compact smooth variety. 
 Let $\sF$ be a rank $n$ locally free sheaf on $X$ and $\sL$ an invertible sheaf. 
 Consider an extension $0\to\sL\to\mathcal{E}\to\mathcal{F}\to0$ corresponding to $\xi\in \text{Ext}^1(\sF,\sL)$. 
 Let $W=\langle \eta_1,\ldots,\eta_{n+1}\rangle$ be a $n+1$-dimensional sublinear system of $\ker({\partial_\xi})\subset H^0(X,\mathcal{F})$. 
 Let $\Omega\in H^0(X,\det\sE)$ be  an adjoint form associated to $W$ as above.
Assume that $H^0(X,\sL)\cong H^0(X,\sL(D_W))$. 
If $\xi\in\ker(H^1(X,\sF^\vee\otimes\sL)\to H^1(X,\sF^\vee\otimes\sL(D_W)))$, then $\Omega \in \Ima(H^0(X,\sL)\otimes \lambda^nW\to H^0(X,\det\sE))$.
\end{thm}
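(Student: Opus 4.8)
The plan is to reverse the argument used in the proof of the Adjoint Theorem, turning the splitting of the extension $\sE^{(n)}$ into an explicit choice of sections $\sigma_i \in H^0(X,\sL)$ that exhibit $\Omega$ inside $\Ima(H^0(X,\sL)\otimes\lambda^nW\to H^0(X,\det\sE))$. First I would unpack the hypothesis $\xi_{D_W}=0$: since $\xi_{D_W}\in H^1(X,\sF^\vee\otimes\sL(D_W))$ classifies the extension $\sE^{(n)}$ sitting in the top row of diagram (\ref{diagramma2}), vanishing means that row splits, so there is a section $t\colon\det\sF(-D_W)\to\sE^{(n)}$ with $\alpha\circ t=\mathrm{id}$. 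Pushing forward along $\psi\colon\sE^{(n)}\to\bigwedge^n\sE$ and using the identification $\det\sF(-D_W)\xrightarrow{\cdot d}\det\sF$, I obtain, for each generator $\omega_i$ of $\lambda^nW$, a lift of $\omega_i$ to a section of $\bigwedge^n\sE$ coming from $\sE^{(n)}$, hence a section of $\bigwedge^n\sE$ that I will want to compare with the $\Omega_i$ of (\ref{Omegai}).

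The key step is then a cohomological bookkeeping argument analogous to the rank-one case (Theorem \ref{rango1}), but now carried out with the sequence (\ref{wedge}) tensored appropriately and its twisted-down version (the top row of (\ref{diagramma2})). I would tensor the relevant sequences by $\sL^\vee$ — or equivalently work with $\sHom(-,\sL)$ — and pass to cohomology, producing a ladder of long exact sequences in which $\Omega$ appears as the image of $1\in\mathbb C=H^0(\sL^\vee\otimes\sL)$ under a connecting-type map, exactly as $\beta(1)=\Omega$ in diagram (\ref{diagramma1}). Chasing this diagram, the vanishing of $\xi_{D_W}$ places the class of $\Omega$ in the image of a map that factors through $H^0$ of the twisted-down sheaf; the difference $\Omega-\sum_i\sigma_i\wedge\Omega_i$ for suitable $\sigma_i$ will then be forced to lie in the kernel of $H^0(X,\det\sE)\to H^0(X,\det\sE\otimes\sO_{D_W})$, i.e.\ it is divisible by $d$ in $\det\sE=\det\sF\otimes\sL$. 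This is precisely the point where the extra hypothesis $H^0(X,\sL)\cong H^0(X,\sL(D_W))$ enters: it guarantees that any section of $\sL(D_W)$ produced in the chase — in particular the coefficients needed to absorb the $d$-divisible remainder — actually comes from $H^0(X,\sL)$, so the $\sigma_i$ can be chosen in $H^0(X,\sL)$ and not merely in $H^0(X,\sL(D_W))$. Concretely, one shows $\Omega$ is in the image of $H^0(X,\sL(D_W))\otimes\lambda^nW$ first, and then uses the isomorphism to descend.

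The main obstacle I expect is controlling the base locus $Z_W$ of the moving part: the splitting of $\sE^{(n)}$ only produces a lift of $\det\sF(-D_W)$, whereas the adjoint $\Omega$ and the $\Omega_i$ naturally land in $\det\sE(-D_W)\otimes\sI_{Z_W}$ (Remark \ref{zeri}), so the argument must keep the ideal sheaf $\sI_{Z_W}$ present throughout and verify that no global sections are lost when passing from $\det\sF(-D_W)\otimes\sI_{Z_W}$ to $\det\sF(-D_W)$ — this is where the injectivity of $\Ima\tilde\mu\to\Ima\mu$ from the forward proof gets re-used in reverse, identifying $\Ima\tilde\mu$ with $\det\sF(-D_W)\otimes\sI_{Z_W}$ and the splitting with its double dual. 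A secondary technical point is signs and the combinatorial identity $\tilde s_1\wedge\cdots\wedge\tilde s_{n+1}=s_1\wedge\cdots\wedge s_{n+1}-\sum_i s_1\wedge\cdots\widehat{s_i}\cdots\wedge s_{n+1}\wedge\sigma_i$ from Lemma \ref{sollevamenti}, which I would invoke to convert "$\tilde\Omega=0$ for some lifts" back into the membership statement; that lemma already does half the work, so the real content is producing the $\sigma_i\in H^0(X,\sL)$, and the hypothesis on $H^0(X,\sL)$ is exactly the hook that makes that possible.
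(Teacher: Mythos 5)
You correctly translate the hypothesis $\xi_{D_W}=0$ into a splitting of the top row of (\ref{diagramma2}), and you correctly locate the role of the assumption $H^0(X,\sL)\cong H^0(X,\sL(D_W))$: one first obtains coefficients in $H^0(X,\sL(D_W))$ multiplying the moving parts $\tilde{\omega}_i$ and then descends. But the mechanism that actually produces such a decomposition of $\Omega$ is missing. The paper's proof hinges on a concrete construction: writing $(\Omega)=D_W+F$ (Remark \ref{zeri}) and building a global section $\Omega'\in H^0(X,\bigwedge^n\sE\otimes\sL(-F))\cong H^0(X,\sE^\vee\otimes\sL(D_W))$ that maps to the section $d$ cutting out $D_W$. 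This uses $\xi_{D_W}=0$ twice: once to lift $\Omega$ through $\sE^{(n)}\otimes\sL$ to $\tilde{\Omega}\in H^0(X,\bigwedge^n\sE\otimes\sL)$, and once more to correct $\tilde{\Omega}$ so that it vanishes along $F$, where the obstruction is a connecting-map image in $H^1(X,\bigwedge^{n-1}\sF\otimes\sL^{\otimes2}(-F))\cong H^1(X,\sF^\vee\otimes\sL(D_W))$ identified with $\xi_{D_W}$ by a local computation. Only with $\Omega'$ in hand does the dualized Koszul-type square built from (\ref{diagramma3}) (evaluation by the $\tilde{\omega}_i$ against $\beta^\vee(\Omega')$) give $\Omega=\sum_i c_i\,\tilde{\omega}_i\,\sigma_i$ with $\sigma_i\in H^0(X,\sL(D_W))$. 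Your substitute for all of this — a cohomology ladder "with $1\mapsto\Omega$" as in (\ref{diagramma1}) — is only asserted: in rank $\ge 2$ the kernel of the generalized Castelnuovo map $W\otimes\sO_X\to\det\sF(-D_W)\otimes\sI_{Z_W}$ is no longer $\sF^\vee(D_W)$, and the connecting homomorphism applied to $\Omega$ is not visibly $\xi_{D_W}$; establishing that identification (or bypassing it, as the paper does via $\Omega'$) is precisely the content you would still have to supply, and it is also where the hypothesis on $H^0(X,\sL)$ would have to enter in a controlled way.

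Two specific steps in your sketch would moreover fail as stated. First, knowing that $\Omega-\sum_i\sigma_i\wedge\Omega_i$ lies in the kernel of $H^0(X,\det\sE)\to H^0(X,\det\sE\otimes\sO_{D_W})$, i.e.\ is divisible by $d$, does not conclude anything: a $d$-divisible remainder is an arbitrary element of $d\cdot H^0(X,\det\sE(-D_W))$, and decomposing it is the original problem over again. Second, "re-using in reverse" the isomorphism $\Ima\tilde{\mu}\cong\det\sF(-D_W)\otimes\sI_{Z_W}$ presupposes liftings $\tilde{\Omega}_i$ whose wedge vanishes, i.e.\ presupposes the membership statement being proved; an abstract splitting of $\sE^{(n)}$ carries no memory of the subsheaf generated by the $\tilde{\Omega}_i$, and likewise pushing the splitting forward only yields some liftings of the $\omega_i$ to $\bigwedge^n\sE$, with no control of their relation to $\Omega$. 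Note finally that the paper's converse never needs $Z_W$ at all — the relevant datum is the divisor $F$ — so the difficulty you flag there is not the real one.
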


\begin{proof}
If $\sF$ is a rank one sheaf, then (\ref{rango1}) gives the thesis without the extra assumption $H^0(X,\sL)\cong H^0(X,\sL(D_W))$.
We assume then rank $\sF$ $\geq2$.

By (\ref{zeri}), we can write $(\Omega)=D_W+F$ with $F$ effective. In the first step of the proof we want to find a global section 
\begin{equation*}
\Omega'\in H^0(X,\bigwedge^n\mathcal{E}\otimes\sL(-F))
\end{equation*} which restricts, through the natural map
\begin{equation*}
\bigwedge^n\mathcal{E}\otimes\sL(-F)\to\det\sE(-F),
\end{equation*} to the section $d\in H^0(\det\sE(-F))$, where $(d)=D_W$.

Consider the commutative diagram:
\begin{equation*}
\xymatrix { 
&0\ar[d]&0\ar[d]&0\ar[d]\\
0\ar[r]&\bigwedge^{n-1}\mathcal{F}\otimes\sL^{\otimes2}(-F)\ar[r]\ar[d]&\bigwedge^{n}\mathcal{E}\otimes\sL(-F)\ar[r]^-{G_2}\ar[d]^-{G_1}&\det\sE(-F)\ar[r]\ar[d]&0\\
0\ar[r]&\bigwedge^{n-1}\mathcal{F}\otimes\sL^{\otimes2}\ar[r]^-{\tau}\ar[d]^-{H_1}&\bigwedge^{n}\mathcal{E}\otimes\sL\ar[r]\ar[d]&\det\sE\ar[r]\ar[d]&0\\
0\ar[r]&\bigwedge^{n-1}\mathcal{F}\otimes\sL^{\otimes2}|_F\ar[r]^-{H_2}\ar[d]&\bigwedge^{n}\mathcal{E}\otimes\sL|_F\ar[r]^-{H_3}\ar[d]&\det\sE|_F\ar[r]\ar[d]&0\\
&0&0&0.
}
\end{equation*}
By the hypothesis $\xi_{D_W}=0$ it follows easily that there exists a lifting $\tilde{\Omega}\in H^0(X,\bigwedge^{n}\mathcal{E}\otimes\sL)$ of $\Omega$. Indeed, tensor (\ref{diagramma2}) by $\sL$ and take a global lifting $f\in H^0(X, \det\sE(-D_W))$ of $\Omega$. Since $\xi_{D_W}=0$, $f$ can be lifted to a section $e\in H^0(X, \sE^{(n)}\otimes\sL)$. Define $\tilde{\Omega}:=\psi(e)$. By commutativity, $H_3(\tilde{\Omega}|_F)=0$ hence we call $\bar{\mu}\in H^0(X,\bigwedge^{n-1}\mathcal{F}\otimes\sL^{\otimes2}|_F)$ the lifting of $\tilde{\Omega}|_F$. A local computation shows that the connecting homomorphism
\begin{equation*}
\delta\colon H^0(X,\bigwedge^{n-1}\mathcal{F}\otimes\sL^{\otimes2}|_F)\to H^1(X,\bigwedge^{n-1}\mathcal{F}\otimes\sL^{\otimes2}(-F))
\end{equation*} maps $\bar{\mu}$ to $\xi_{D_W}$, which is zero by hypothesis. Then there exists a global section $$\mu \in H^0(X,\bigwedge^{n-1}\mathcal{F}\otimes\sL^{\otimes2})$$ which is a lifting of $\bar{\mu}$.
The section
\begin{equation*}
\hat{\Omega}:=\Omega-\tau(\mu)\in H^0(X,\bigwedge^{n}\mathcal{E}\otimes\sL)
\end{equation*} is a new lifting of $\Omega$ that, by construction, vanishes when restricted to $F$. We call
\begin{equation*}
\Omega'\in H^0(X,\bigwedge^{n}\mathcal{E}\otimes\sL(-F))
\end{equation*} the global section which lifts $\hat{\Omega}$. It is easy to see that
$G_2(\Omega')=d$ so $\Omega'$ is the section we wanted.

In the second part of the proof we prove that $\Omega \in \Ima(H^0(X,\sL)\otimes \lambda^nW\to H^0(X,\det\sE))$.
The global sections
\begin{equation*}
\omega_i:=\lambda^n(\eta_1\wedge\ldots\wedge\hat{\eta_i}\wedge\ldots\wedge\eta_{n+1})\in H^0(X,\det\sF)
\end{equation*} generate $\lambda^nW$ and by definition they vanish on $D_W$, that is there exist global sections $\tilde{\omega}_i\in H^0(X,\det\sF(-D_W))$ such that
\begin{equation*}
\omega_i=\tilde{\omega}_i\cdot d.
\end{equation*} We consider the commutative diagram
\begin{equation}
\label{diagramma3}
\xymatrix { 0\ar[r] &\sL(-F)\ar[r]^-\alpha\ar[d]^-{\cdot F}& W\otimes\mathcal{O}_X\ar[r]^-\gamma\ar[d]^-\beta & \bar{\mathcal{F}}\ar[r]\ar[d]^-\iota&0\\
0\ar[r]&\sL\ar[r]&\mathcal{E}\ar[r]&\mathcal{F}\ar[r]&0.
}
\end{equation} The map $\beta$ is locally defined by
\begin{equation*}
(f_1,\ldots,f_{n+1})\mapsto (-1)^nf_1\cdot s_1+\ldots+f_{n+1}\cdot s_{n+1}.
\end{equation*} 

The map $\alpha$ is defined in the following way: if $f\in\sL(-F)(U)$ is a local section, then, locally on $U$, $\alpha$ is given by
\begin{equation*}
f\mapsto (\tilde{\omega}_1(f),\cdots,\tilde{\omega}_{n+1}(f)),
\end{equation*}where we observe that the sections $\tilde{\omega}_i$ are global sections of the dual sheaf of $\sL(-F)$.
The sheaf $\bar{\sF}$ is by definition the cokernel of the first row.
Now, tensoring by $\sL^\vee$, we have 
\begin{equation}
\xymatrix { 0\ar[r] &\sO_X(-F)\ar[r]^-\alpha\ar[d]^-{\cdot F}& W\otimes\sL^\vee\ar[r]^-\gamma\ar[d]^-\beta & \bar{\mathcal{F}}\otimes\sL^\vee\ar[r]\ar[d]^-\iota&0\\
0\ar[r]&\sO_X\ar[r]&\mathcal{E}\otimes\sL^\vee\ar[r]&\mathcal{F}\otimes\sL^\vee\ar[r]&0.
}
\end{equation} 
Dualizing and tensoring again by $\sO_X(D_W)$, we obtain the commutative square

\begin{equation*}
\xymatrix {  \bigwedge^{n}W\otimes\sL(D_W)\ar[r]^-{\alpha^\vee} & \det\sE\\
\sE^\vee\otimes\sL(D_W)\ar[r]\ar[u]^{\beta^\vee}&\mathcal{O}_X(D_W)\ar[u]^-{\cdot F},
}
\end{equation*}
where we have used the isomorphism of vector spaces $W^\vee\cong\bigwedge^{n}W$, given by
\begin{equation*}
\eta^i\mapsto \eta_1\wedge\ldots\wedge\hat{\eta_i}\wedge\ldots\wedge\eta_{n+1}=:e_i
\end{equation*} where $\eta^1,\ldots,\eta^{n+1}$ is the basis of $W^\vee$ dual to the basis $\eta_1,\ldots,\eta_{n+1}$ of $W$. By definition of $\alpha$ we have that $\alpha^\vee$ is the evaluation map given by the global sections $\tilde{\omega}_i$.
Note that $\sE^\vee\otimes\sL(D_W)\cong \bigwedge^{n}\mathcal{E}\otimes\sL(-F)$.
Taking global sections we have
\begin{equation*}
\xymatrix {  \bigwedge^{n}W\otimes H^0(X,\sL(D_W))\ar[r]^-{\overline{\alpha^\vee}} & H^0(X,\det\sE)\\
H^0(X,\sE^\vee\otimes\sL(D_W))\ar[r]\ar[u]^{\overline{\beta^\vee}}&H^0(X,\mathcal{O}_X(D_W))\ar[u]^-{\cdot F}.
}
\end{equation*}
The section $\Omega'\in H^0(X,\sE^\vee\otimes\sL(D_W))$ produces in $H^0(X,\det\sE)$ the adjoint $\Omega$, so by commutativity
\begin{equation*}
\Omega=\overline{\alpha^\vee}(\overline{\beta^\vee}(\Omega')).
\end{equation*} We have
\begin{equation*}
\overline{\beta^\vee}(\Omega')=\sum_{i=1}^{n+1}c_i\cdot e_i\otimes \sigma_i,
\end{equation*} where $c_i\in \mathbb{C}$ and $\sigma_i\in H^0(X,\sL(D_W))$. By our hypothesis $H^0(X,\sL)\cong H^0(X,\sL(D_W))$, there exists sections $\tilde{\sigma_i}\in H^0(X, \sL)$ with $\sigma_i=\tilde{\sigma_i}\cdot d$. So
\begin{equation*} 
\Omega=\overline{\alpha^\vee}(\overline{\beta^\vee}(\Omega'))=\overline{\alpha^\vee}(\sum_{i=1}^{n+1}c_i\cdot e_i\otimes \sigma_i)=\sum_{i=1}^{n+1}c_i\cdot\tilde{\omega}_i\cdot \sigma_i=\sum_{i=1}^{n+1} c_i\cdot\tilde{\omega}_i\cdot d\cdot \tilde{\sigma}_i=\sum_{i=1}^{n+1} c_i\cdot\omega_i\cdot \tilde{\sigma}_i.
\end{equation*} This is exactly our thesis.
\end{proof}

By the Adjoint Theorem
 and (\ref{inversoaggiunta}) we deduce the following
\begin{cor}
\label{aggiuntaconbasezero}
If $D_W=0$, then $\xi=0$ iff $\Omega \in \Ima(H^0(X,\sL)\otimes \lambda^nW\to H^0(X,\det\sE))$.
\end{cor}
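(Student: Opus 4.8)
The plan is to deduce the statement formally from the two results already established, the Adjoint Theorem and Theorem \ref{inversoaggiunta}, by specializing everything to the case $D_W=0$. The first thing I would record is that when the fixed divisor $D_W$ is trivial we have $\sO_X(D_W)=\sO_X$, so all the twisted sheaves appearing in those theorems coincide with the untwisted ones: $\sF^\vee\otimes\sL(D_W)=\sF^\vee\otimes\sL$ and $\sL(D_W)=\sL$. Consequently the map $\rho\colon H^1(X,\sF^\vee\otimes\sL)\to H^1(X,\sF^\vee\otimes\sL(D_W))$ is the identity, so $\xi_{D_W}=\rho(\xi)=\xi$ and $\ker\rho=0$; and, for the same reason, the natural map $H^0(X,\sL)\to H^0(X,\sL(D_W))$ is the identity isomorphism.

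With these identifications in hand, the equivalence splits into the two implications already proved. For the direction ``$\Omega \in \Ima(H^0(X,\sL)\otimes \lambda^nW\to H^0(X,\det\sE))\ \Rightarrow\ \xi=0$'' I would simply apply the Adjoint Theorem: it gives $\xi\in\ker\rho$, and since $\ker\rho=\{0\}$ when $D_W=0$ we conclude $\xi=0$. For the converse ``$\xi=0\ \Rightarrow\ \Omega \in \Ima(H^0(X,\sL)\otimes \lambda^nW\to H^0(X,\det\sE))$'' I would invoke Theorem \ref{inversoaggiunta}. Its extra hypothesis $H^0(X,\sL)\cong H^0(X,\sL(D_W))$ is automatically fulfilled here, because for $D_W=0$ the two cohomology groups are literally the same, the isomorphism being the identity; and the hypothesis $\xi\in\ker\rho$ holds trivially since $\xi=0$. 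Theorem \ref{inversoaggiunta} then delivers exactly the required membership.

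I do not expect any real obstacle in this argument: no geometric input beyond the two quoted theorems is needed, and the only point demanding a little care is the bookkeeping that makes both the kernel condition on $\xi$ and the auxiliary $H^0$-isomorphism hypothesis of Theorem \ref{inversoaggiunta} degenerate to trivialities once $D_W=0$. If one wished to avoid even citing Theorem \ref{inversoaggiunta} in the rank-one case, one could alternatively appeal to Theorem \ref{rango1} directly, which for $D_W=0$ already states the full equivalence; but the cleanest write-up is the two-line deduction above, valid uniformly in all ranks.
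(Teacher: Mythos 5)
Your argument is correct and is exactly the deduction the paper intends: the corollary is stated there as an immediate consequence of the Adjoint Theorem and Theorem \ref{inversoaggiunta}, with the specialization $D_W=0$ making $\xi_{D_W}=\xi$ and the hypothesis $H^0(X,\sL)\cong H^0(X,\sL(D_W))$ automatic, just as you spell out. No issues.
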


\section{Infinitesimal Torelli Theorem for projective hypersurfaces}
In this section we want to study adjoint images in the case of smooth hypersurfaces of the projective space $\mP^n$.

\subsection{Meromorphic $1$-forms on a smooth projective hypersurface}

Let $V\subset\mP^n$ be a smooth hypersurface defined by a homogeneous polynomial $F\in \mC[\xi_0,\ldots,\xi_n]$ of degree $\deg F=d$. An infinitesimal deformation $\xi\in \text{Ext}^1(\Omega^1_V,\sO_V)$ of $V$ gives an exact sequence for the sheaf of differential forms $\Omega_V^1$:
\begin{equation}
\label{estensione}
0\to\sO_V\to \Omega^1_{\sV|V}\to\Omega^1_V\to0. 
\end{equation} We assume that $n\geq3$, hence $H^0(V,\Omega^1_V)=0$ and we can not construct the adjoint of this sequence directly, so we twist (\ref{estensione}) by a suitable integer $a$ such that $\Omega^1_V(a)$ has at least $n=\rank(\Omega^1_V)+1$ global sections. A standard computation shows that $a=2$ is enough for this purpose, so from now on we will consider the sequence
\begin{equation}
0\to\sO_V(2)\to \Omega^1_{\sV|V}(2)\to\Omega^1_V(2)\to0
\label{sequenzacon2}
\end{equation} which again corresponds to $\xi\in \text{Ext}^1(\Omega^1_V(2),\sO_V(2))\cong \text{Ext}^1(\Omega^1_V,\sO_V)\cong H^1(V,\Theta_V)$, where $\Theta_V$ denotes the sheaf of vector fields on $V$.
Denote by $\sJ$ the Jacobian ideal of $F$, that is the ideal of $\mC[\xi_0,\ldots,\xi_n]$ generated by the partial derivatives $\frac{\partial{F}}{\partial{\xi_i}}$ for $i=0,\ldots,n$.
Following \cite{Gri1}[Theorem 9.8], the deformation $\xi$ is given by a class $[R]$ of degree $d$ in the quotient $\mC[\xi_0,\ldots,\xi_n]/\sJ$. If we choose a representative $R$ of degree $d$ for this class, then $F+t R=0$, for small $t$, is the equation of the hypersurface that is the associated deformation of $V$.

Together with (\ref{sequenzacon2}), we have the conormal exact sequence
\begin{equation}
\label{sequenzaimmersione}
0\to\sO_V(-d)\to \Omega^1_{\mP^n|V}\to\Omega^1_V\to0. 
\end{equation}

If we put these sequences together we obtain the diagram
\begin{equation*}
\xymatrix{
&&&0&\\
0\ar[r]&\sO_V(2)\ar[r]&\Omega^1_{\sV|V}(2)\ar[r]&\Omega^1_V(2)\ar[u]\ar[r]&0\\
&&&\Omega^1_{\mP^n|V}(2)\ar[u]&\\
&&&\sO_V(2-d)\ar[u]&\\
&&&0\ar[u]&}
\end{equation*} which can be completed as follows
\begin{equation}
\label{diagramma4}
\xymatrix{
&&0&0&\\
0\ar[r]&\sO_V(2)\ar[r]&\Omega^1_{\sV|V}(2)\ar[u]\ar[r]&\Omega^1_V(2)\ar[u]\ar[r]&0\\
0\ar[r]&\sO_V(2)\ar[r]\ar@{=}[u]&\sG\ar[r]\ar[u]&\Omega^1_{\mP^n|V}(2)\ar[u]\ar[r]&0\\
&&\sO_V(2-d)\ar[u]\ar@{=}[r]&\sO_V(2-d)\ar[u]&\\
&&0\ar[u]&0.\ar[u]&}
\end{equation}

By \cite{Gri1} the deformation $\xi$ of (\ref{sequenzacon2}) comes from $R\in H^0(\mP^n, \sO_\mP^n(d))$, then it gives the zero element in 
$H^0(V,\Theta_{ \mP^n |V} )$, hence we have that the sheaf $\sG$ in (\ref{diagramma4}) is a direct sum $\sG=\sO_V(2)\oplus\Omega^1_{\mP^n|V}(2)$ and we have a natural morphism $\phi\colon \Omega^1_{\mP^n|V}(2)\to \Omega^1_{\sV|V}(2)$ which fits in the diagram 
\begin{equation}
\label{diagramma5}
\xymatrix{
&&0&0&\\
0\ar[r]&\sO_V(2)\ar[r]&\Omega^1_{\sV|V}(2)\ar[u]\ar[r]&\Omega^1_V(2)\ar[u]\ar[r]&0\\
0\ar[r]&\sO_V(2)\ar[r]\ar@{=}[u]&\sG\ar[r]\ar[u]&\Omega^1_{\mP^n|V}(2)\ar[u]\ar[r]\ar[ul]^-\phi&0\\
&&\sO_V(2-d)\ar[u]\ar@{=}[r]&\sO_V(2-d)\ar[u]&\\
&&0\ar[u]&0.\ar[u]&}
\end{equation}

The morphism $\phi$ gives in a natural way a morphism 
\begin{equation*}
\phi^n\colon H^0(V,\det(\Omega^1_{\mP^n|V}(2)))\cong H^0(V,\sO_V(n-1))\to H^0(V,\det(\Omega^1_{\sV|V}(2)))\cong H^0(V,\sO_V(n+d-1)).
\end{equation*}
We can write explicitly the isomorphism between $H^0(V,\det(\Omega^1_{\mP^n|V}(2)))=H^0(V,\Omega^n_{\mP^n|V}(2n))$ and $H^0(V,\sO_V(n-1))$. Note that $H^0(\mP^n,\Omega^n_{\mP^n}(2n))\to H^0(V,\Omega^n_{\mP^n|V}(2n))$ is surjective, so we will focus on the rational $n$-forms on $\mP^n$.
By \cite{Gri1}[Corollary 2.11] this forms may be written as $\omega=\frac{P\Psi}{Q}$ where $\Psi=\sum_{i=0}^n(-1)^i\xi_i(d\xi_0\wedge\ldots\wedge d\widehat{\xi_i}\wedge\ldots\wedge d\xi_n)$ gives a generator of $H^0(\mP^n,\Omega^n_{\mP^n}(n+1))$ and $\deg Q=\deg P +(n+1)$.  In our case $Q$ is a polynomial of degree $2n$, hence $P$ has degree $n-1$. This identification depends on the (noncanonical) choice of the polynomial $Q$ and gives an isomorphism $H^0(V,\Omega^n_{\mP^n|V}(2n))\to H^0(V,\sO_V(n-1))$ defined by $\omega|_V\mapsto P$.

\begin{prop}
\label{moltiplicazioneR}
$\phi^n$ is given via the multiplication by the polynomial $R$ (modulo $F$). 
\end{prop}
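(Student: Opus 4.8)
The plan is to make the morphism $\phi$ explicit and then take $\wedge^n$. I realize the first order deformation as the hypersurface $\sV=\{F+tR=0\}$ inside $\mP^n\times\Spec\mC[t]/(t^2)$, so that $\Omega^1_{\sV|V}=\Omega^1_{\sV}\otimes_{\sO_{\sV}}\sO_V$, the sub-line bundle $\sO_V$ in (\ref{sequenzacon2}) is generated by $dt$, and pulling back differentials along the projection $\sV\to\mP^n$ and restricting to $V$ produces a morphism $\Omega^1_{\mP^n|V}(2)\to\Omega^1_{\sV|V}(2)$ lifting the quotient map $\Omega^1_{\mP^n|V}(2)\to\Omega^1_V(2)$ through $\Omega^1_{\sV|V}(2)\to\Omega^1_V(2)$; this is exactly the morphism $\phi$ coming from the splitting of $\sG$ in (\ref{diagramma5}). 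On the affine chart $\xi_0\neq0$, with $x_i=\xi_i/\xi_0$ and $f=F(1,x)$, one has $\phi(dx_i)=dx_i$.

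The crucial step is to compute $\phi$ on the conormal sub-line bundle $\sO_V(2-d)\hookrightarrow\Omega^1_{\mP^n|V}(2)$. Since $\phi$ lifts the quotient onto $\Omega^1_V(2)$, it carries this sub-bundle into $\Ker(\Omega^1_{\sV|V}(2)\to\Omega^1_V(2))=\sO_V(2)$, and so induces a morphism $\sO_V(2-d)\to\sO_V(2)$, i.e.\ a section of $\sO_V(d)$. On the chart above the conormal inclusion sends a local generator to $df$; differentiating the relation $f+tR(1,x)=0$ and setting $t=0$ gives $df=-R(1,x)\,dt$ in $\Omega^1_{\sV|V}$, and since $dt$ trivializes $\sO_V\hookrightarrow\Omega^1_{\sV|V}$ the section of $\sO_V(d)$ thus obtained is $-R$ modulo $F$. (This is precisely Griffiths' identification of the class $[R]$ with the pushout of the conormal extension along multiplication by $R$.) Hence $\phi|_{\sO_V(2-d)}$ is multiplication by $R$, up to sign.

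Finally I would invoke the multiplicativity of the determinant in a morphism of extensions. Applying $\wedge^n$ to the commutative ladder whose top row is the twisted conormal sequence $0\to\sO_V(2-d)\to\Omega^1_{\mP^n|V}(2)\to\Omega^1_V(2)\to0$, whose bottom row is (\ref{sequenzacon2}), and whose middle vertical arrow is $\phi$: under the canonical isomorphisms $\det(\Omega^1_{\mP^n|V}(2))\cong\sO_V(2-d)\otimes\det(\Omega^1_V(2))$ and $\det(\Omega^1_{\sV|V}(2))\cong\sO_V(2)\otimes\det(\Omega^1_V(2))$, a one-line local computation (only the wedge summands using a single factor from the sub-line bundle survive) shows that $\phi^n=\wedge^n\phi$ coincides with $(\phi|_{\sO_V(2-d)})\otimes\mathrm{id}_{\det(\Omega^1_V(2))}$, hence with multiplication by $R$. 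Transporting this along the identifications $\det(\Omega^1_{\mP^n|V}(2))\cong\sO_V(n-1)$, $\omega|_V\mapsto P$, and $\det(\Omega^1_{\sV|V}(2))\cong\sO_V(n+d-1)$ — the latter set up through $\det(\Omega^1_V(2))$ and the adjunction isomorphism $\det\Omega^1_V\cong\sO_V(d-n-1)$, compatibly with the former — yields that $\phi^n$ is multiplication by $R$ modulo $F$.

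The main obstacle I anticipate is the bookkeeping in the last step: one must arrange the two noncanonical identifications (the choice of the denominator $Q$ on the $\mP^n$ side and the adjunction/residue isomorphism $\det\Omega^1_V\cong\sO_V(d-n-1)$) so that they match up, ensuring that $\phi^n$ is multiplication by $R$ on the nose rather than by a nonzero scalar or unit multiple of it; once the identification of $\det(\Omega^1_{\sV|V}(2))$ is taken to agree with that of $\det(\Omega^1_{\mP^n|V}(2))$ through $\det(\Omega^1_V(2))$, the discrepancy disappears. In any event this ambiguity is irrelevant for the applications in Theorem [C], where only the image of $\phi^n$ and the membership $R\in\sJ$ play a role.
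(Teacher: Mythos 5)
Your proposal is correct, and its essential input is the same as the paper's: realize the first--order deformation as $F+tR=0$, differentiate, and use $dF=-R\,dt$ along $V$ (the paper's relation (\ref{dexi0})). Where you genuinely diverge is in how you pass from this rank--one identity to the statement about $\phi^n$. The paper substitutes the resulting expression for $dz_1$ directly into the rational $n$-form $\frac{P\Psi}{Q}$ in coordinates, wedges and homogenizes; that explicit computation simultaneously fixes, in the same local frame, how a section of $\det(\Omega^1_{\sV|V}(2))$ is read as a polynomial of degree $n+d-1$, so ``multiplication by $R$'' comes out with no residual normalization to discuss. You instead compute $\phi$ only on the conormal sub-line bundle $\sO_V(2-d)$ (where it lands in $\sO_V(2)$ and is multiplication by $-R$), and then invoke the fact that a morphism of extensions inducing the identity on the common quotient induces on determinants the map on the sub-line bundles tensored with the identity on the determinant of the quotient; this is correct, as one checks on a local frame adapted to the filtration. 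Your route is more structural, and it has the added virtue of making visible that changing the splitting $\phi$ by an element of $\Hom(\Omega^1_{\mP^n|V}(2),\sO_V(2))$ only alters $\phi^n$ by multiplication by an element of the Jacobian ideal, which is all that matters in Theorem [C]. The price, which you acknowledge, is that the two noncanonical identifications with $\sO_V(n-1)$ and $\sO_V(n+d-1)$ are not pinned down, so your argument gives $\phi^n$ as multiplication by $R$ only up to sign and unit; since the paper's own identification on the target side is left equally implicit, and since only the membership $R\in\sJ$ (equivalently $RP\in\sJ$) is used later, this discrepancy is harmless.
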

\begin{proof}

Locally we can see $\sV$ in the product $\Delta\times\mP^n$ of the projective space with a disk; here $\sV$ is defined by the equation $F+tR=0$. Hence $d(F+tR)=0$ in $\Omega^1_{\sV}$, that is $dF+dt\cdot R+dR\cdot t=0$. 

Call $F_i:=\frac{\partial F}{\partial \xi_i}$. Since $V$ is smooth, there exist $i$ such that $U_i=(F_i\neq0)$ is a nontrivial open subset; let for example $U_1$ be nontrivial.
Take local coordinates $z_i=\frac{\xi_i}{\xi_0}$ in the open set $(\xi_0\neq0)\cap U_1$.
Then we have 

\begin{equation}
d z_1=-\frac{Rdt}{F_1}-\frac{tdR}{F_1}-\sum_{i>1}\frac{F_i}{F_1}d z_i
\end{equation} which gives in $V$ (that is for $t=0$)
\begin{equation}
\label{dexi0}
d z_1=-\frac{Rdt}{F_1}-\sum_{i>1}\frac{F_i}{F_1}d z_i
\end{equation}
The image $\phi^n(\omega|_V)$ is then obtained by the substitution of (\ref{dexi0}) in $\frac{P(z)}{Q(z)}dz_1\wedge\ldots\wedge dz_n$, which is the local form of $\frac{P(\xi)\Psi}{Q(\xi)}$. Hence
\begin{equation}
\frac{P(z)}{Q(z)}dz_1\wedge\ldots\wedge dz_n=-\frac{P(z) R(z)}{Q(z) F_1(z)}dt\wedge dz_2\wedge\ldots\wedge dz_n.
\end{equation}
If we homogenize we obtain on $U_1$
\begin{equation*}
\frac{P\Psi}{Q}=-\frac{PR}{QF_1}\sum_{i\neq1}(-1)^{i-1}\text{sgn}(i-1)\xi_idt\wedge d\xi_0\wedge\widehat{d\xi_1}\ldots\wedge \widehat{d\xi_i}\wedge\ldots\wedge d\xi_n
\end{equation*} Hence
\begin{equation}
\phi^n(\omega|_V)=-\frac{PR}{QF_1}\sum_{i\neq1}(-1)^{i-1}\text{sgn}(i-1)\xi_idt\wedge d\xi_0\wedge\widehat{d\xi_1}\ldots\wedge \widehat{d\xi_i}\wedge\ldots\wedge d\xi_n
\end{equation} and it is clear that $\phi^n$ is given by multiplication with $R$.
\end{proof}   
\subsection{A canonical choice of adjoints on a hypersurface of degree $d>2$}

\medskip

We want now to construct adjoint forms associated to the sequence (\ref{sequenzacon2}).

Assume that $n\geq3$, so that $H^1(V,\sO_V(2))=H^1(V,\sO_V(2-d))=0$, and we can lift all the global sections of $H^0(V,\Omega^1_V(2))$ both in the horizontal and in the vertical sequence of (\ref{diagramma5}).

We take $\eta_1,\ldots,\eta_n\in H^0(V,\Omega^1_V(2))$ global forms and we want to find liftings $s_1,\ldots,s_n\in H^0(V,\Omega^1_{\sV|V})$. This can be done since $H^1(V,\sO_V(2))$ is zero. A generalized adjoint is then the global section of the sheaf $\det (\Omega^1_{\sV|V}(2))=\sO_V(n+d-1)$ given by $\Omega:=\Lambda^n(s_1\wedge\ldots\wedge s_n)\in H^0(V,\det (\Omega^1_{\sV|V}(2)))$. 

We point out another interesting way to compute this generalized adjoint form using Proposition (\ref{moltiplicazioneR}).

Consider the sequence (\ref{sequenzaimmersione}), that is the vertical sequence in (\ref{diagramma5}). Since $H^1(V,\sO_V(2-d))=0$, we can find liftings $\tilde{s_1},\ldots,\tilde{s_n}\in H^0(V, \Omega^1_{\mP^n|V}(2))$ of the sections $\eta_1,\ldots,\eta_n$. Furthermore they are unique if $d>2$. We can thus consider the adjoint form associated to (\ref{sequenzaimmersione}) given by $\widetilde{\Omega}:=\Lambda^n (\tilde{s_1}\wedge\ldots\wedge\tilde{s_n})$. This adjoint is independent from the deformation $\xi$; it depends only on $V$ and its embedding in $\mP^n$. If $d>2$, then $\widetilde{\Omega}$ is unique.

To describe $\widetilde{\Omega}$ explicitly we first consider the exact sequence
\begin{equation}
0\to\Omega^1_{\mP^n}(2-d)\to\Omega^1_{\mP^n}(2)\to\Omega^1_{\mP^n|V}(2)\to0.
\end{equation} If $d>2$, the vanishing of $H^0(V,\Omega^1_{\mP^n}(2-d))$ and $H^1(V,\Omega^1_{\mP^n}(2-d))$ (c.f. Bott Formulas), gives the isomorphism $H^0(V,\Omega^1_{\mP^n}(2))=H^0(V,\Omega^1_{\mP^n|V}(2))$. Hence, the forms $\tilde{s_i}$ are the restriction on $V$ of global rational $1$-forms. By \cite{Gri1}[Theorem 2.9] we can write
\begin{equation}
\label{1formerazionali}
\tilde{s_i}=\frac{1}{Q}\sum_{j=0}^n L^i_j d\xi_j
\end{equation} where $\deg Q=2$ and $L^i_j$ is a homogeneous polynomial of degree $1$ which does not contain $\xi_j$ in its expression. Hence
\begin{equation}
\widetilde{\Omega}= \Lambda^n(\tilde{s_1}\wedge\ldots\wedge\tilde{s_n})=\frac{1}{Q^n}\sum_{i=0}^n M_i d\xi_0\wedge\ldots\wedge \widehat{d\xi_i}\wedge\ldots\wedge d\xi_n
\end{equation} where $M_i$ is the determinant of the matrix obtained by 
\begin{equation}
\label{matrice}
\begin{pmatrix}
	L^1_0&\ldots&L^n_0\\
	\vdots&&\vdots\\
	L^1_n&\ldots&L^n_n
\end{pmatrix}
\end{equation} removing the $i$-th row. Since $\widetilde{\Omega}$ is a rational $n$-form on $\mP^n$, following \cite{Gri1}[Corollary 2.11] it can be written as $\frac{P\Psi}{Q^n}$, and we deduce that
\begin{equation}
\frac{M_i}{(-1)^i\xi_i}=P
\end{equation} for all $i=0,\ldots,n$. $P$ is a polynomial of degree $n-1$ and it corresponds to $\widetilde{\Omega}$ via the isomorphism $H^0(V,\Omega^n_{\mP^n|V}(2n))\cong H^0(V,\sO_V(n-1))$. Hence by (\ref{moltiplicazioneR}) we have that the form $\Omega\in H^0(V,\sO_V(n+d-1))$ given by $PR$ is a canonical choice of adjoint form for $W=\langle \eta_1,\ldots, \eta_n\rangle$ and $\xi$.
\begin{rmk}
Alternatively this can be seen using the Euler sequence on $V$:
\begin{equation}
\label{eulero}
0\to \sO_V\to\bigoplus^{n+1}\sO_V(1)\to \Theta_{\mP^n|V}\to0.
\end{equation}
This sequence, dualized and conveniently tensorized gives
\begin{equation}
0\to \Omega_{\mP^n|V}^1(2)\to \bigoplus_{i=1}^{n+1}\sO_V(1)\to \sO_{V}(2)\to 0.
\label{euleroduale2}
\end{equation} The sections $\tilde{s}_i$ are associated via the first morphism to an $n+1$-uple of linear polynomials $(L_i^0,\ldots,L_i^n)$. Then, taking the wedge product of (\ref{euleroduale2})
we obtain an exact sequence
\begin{equation}
0\to\Omega^n_{\mP^n|V}(2n)\cong\sO_V(n-1)\to \bigwedge^n\sO_V(1)=\bigoplus^{n+1}\sO_V(n)\to \Omega^{n-1}_{\mP^n|V}(2n)\to 0
\end{equation} where the morphism $\sO_V(n-1)\to\bigoplus^{n+1}\sO_V(n)$ is given by
\begin{equation}
G\mapsto (G\xi_0,\ldots,(-1)^nG\xi_n).
\end{equation} Since $\widetilde{\Omega}=\Lambda^n (\tilde{s_1}\wedge\ldots\wedge\tilde{s_n})\in H^0(V,\Omega^n_{\mP^n|V}(2n))$ is sent exactly to $(L_0^0,\ldots,L_0^n)\wedge\ldots\wedge(L_n^0,\ldots,L_n^n)=(M_0,\ldots,M_n)$ (using the same notation as above), then we conclude that $\widetilde{\Omega}$ corresponds in $H^0(V,\sO_V(n-1))$ to a polynomial $P$ which satisfies 
\begin{equation}
\frac{M_i}{(-1)^i\xi_i}=P.
\end{equation}
\end{rmk}

\subsection{The adjoint sublinear systems obtained by meromorphic $1$-forms}

To study the conditions given in (\ref{aggiuntazero1}) and (\ref{aggiuntazero2}), we need to describe the sections
\begin{equation*}
\widetilde{\Omega_i}:=\Lambda^{n-1}(\tilde{s_1}\wedge\ldots\wedge\hat{\tilde{s_i}}\wedge\ldots\wedge \tilde{s_n})\in H^0(V,\Omega^{n-1}_{\mP^n|V}(2n-2))
\end{equation*} (c.f. (\ref{Omegai})) and their images in $H^0(V,\Omega^{n-1}_{V}(2(n-1)))=H^0(V,\sO_V(n+d-3))$ that we have denoted by $\omega_i$.

A computation similar to the above shows that
\begin{equation}
\label{scrittura1}
\widetilde{\Omega_i}= \Lambda^{n-1}(\tilde{s_1}\wedge\ldots\wedge\hat{\tilde{s_i}}\wedge\ldots\wedge \tilde{s_n})=\frac{1}{Q^{n-1}}\sum_{j<k}M^i_{jk}d\xi_0\wedge\ldots\wedge\hat{d\xi_j}\wedge\ldots\wedge\hat{d\xi_k}\wedge\ldots\wedge d\xi_n
\end{equation} where $M^i_{jk}$ is the determinant of the matrix obtained by (\ref{matrice}) removing the $i$-th column and the $j$-th and $k$-th rows.
On the other hand, rearranging the expression of \cite{Gri1}[Theorem 2.9] we can write 
\begin{equation}
\label{scrittura2}
\widetilde{\Omega_i}=\frac{1}{Q^{n-1}}\sum_{j} A^i_j(\sum_{k\neq j} (-1)^{k+j}\text{sgn}(k-j)\xi_k d\xi_0\wedge\ldots\wedge\hat{d\xi_j}\wedge\ldots\wedge\hat{d\xi_k}\wedge\ldots\wedge d\xi_n)
\end{equation} with $\deg A^i_j=n-2$.

Comparing (\ref{scrittura1}) and (\ref{scrittura2}) gives
\begin{equation}
M^i_{jk}=(-1)^{j+k}(A^i_j\xi_k-\xi_j A_k^i).
\end{equation} As before this can be computed also via the Euler sequence.

We call $\Xi_j:=\sum_{k\neq j} (-1)^{k+j}\text{sgn}(k-j)\xi_k d\xi_0\wedge\ldots\wedge\hat{d\xi_j}\wedge\ldots\wedge\hat{d\xi_k}\wedge\ldots\wedge d\xi_n$. Note that the sections $\Xi_j$, for $j=0,\ldots,n$ give a basis of $H^0(V,\Omega^{n-1}_{\mP^n|V}(n))$.

\begin{prop}
$\omega_i=\sum_{j} A^i_j\cdot F_j$ in $H^0(V,\sO_V(n+d-3))$
\end{prop}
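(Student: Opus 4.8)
The plan is to imitate the local computation used for Proposition \ref{moltiplicazioneR}. By construction $\omega_i$ is the image of
$$\widetilde{\Omega_i}=\Lambda^{n-1}(\tilde s_1\wedge\dots\wedge\widehat{\tilde s_i}\wedge\dots\wedge\tilde s_n)\in H^0(V,\Omega^{n-1}_{\mP^n|V}(2(n-1)))$$
under the restriction map $\Omega^{n-1}_{\mP^n|V}(2(n-1))\to\Omega^{n-1}_V(2(n-1))\cong\sO_V(n+d-3)$ coming from the $(n-1)$-st wedge power of the conormal sequence (\ref{sequenzaimmersione}), this being the identification with respect to which the meromorphic forms above are turned into polynomials. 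I would work on the open set $U:=(\xi_0\neq0)\cap(F_1\neq0)$, which is nonempty because $V$ is smooth, with affine coordinates $z_k=\xi_k/\xi_0$, $k=1,\dots,n$, writing $f_k:=F_k(1,z)$ for $k=0,\dots,n$ and $q:=Q(1,z)$. On $U\cap V$ the conormal sequence gives $\sum_{k=1}^n f_k\,dz_k=0$ in $\Omega^1_V$, hence $dz_1=-\tfrac1{f_1}\sum_{k\geq 2}f_k\,dz_k$; so the basic form $dz_1\wedge\dots\wedge\widehat{dz_k}\wedge\dots\wedge dz_n$ restricts on $V$ to $dz_2\wedge\dots\wedge dz_n$ for $k=1$ and to $(-1)^{k+1}\tfrac{f_k}{f_1}\,dz_2\wedge\dots\wedge dz_n$ for $k\geq 2$.

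Next I would pull (\ref{scrittura1}) back along $z\mapsto(1,z)$. Since $d\xi_0=0$ on this chart, only the summands with $j=0$ survive, and using the identity $M^i_{jk}=(-1)^{j+k}(A^i_j\xi_k-\xi_j A^i_k)$ established above (so $M^i_{0k}=(-1)^k(A^i_0\xi_k-\xi_0A^i_k)$), one gets
$$\widetilde{\Omega_i}|_U=\frac1{q^{n-1}}\sum_{k=1}^n(-1)^k\big(A^i_0(1,z)z_k-A^i_k(1,z)\big)\,dz_1\wedge\dots\wedge\widehat{dz_k}\wedge\dots\wedge dz_n.$$
Restricting to $V$ by the formulas of the first paragraph and collecting the coefficient of $dz_2\wedge\dots\wedge dz_n$, one finds $\tfrac{1}{q^{n-1}f_1}\big(\sum_{k=1}^n f_kA^i_k-A^i_0\sum_{k=1}^n f_kz_k\big)$; Euler's relation $\sum_{k=0}^n\xi_kF_k=d\,F$ ($d=\deg F$), evaluated at $(1,z)$ and simplified by $F(1,z)=0$ on $V$, gives $\sum_{k=1}^n f_kz_k=-f_0$, so the coefficient becomes $\tfrac{1}{q^{n-1}f_1}\sum_{k=0}^n f_kA^i_k=\tfrac{1}{q^{n-1}f_1}\big(\sum_k A^i_kF_k\big)(1,z)$. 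Now $\tfrac{g(1,z)}{q^{n-1}f_1}\,dz_2\wedge\dots\wedge dz_n$ is precisely the chart expression on $U$ of the section of $\Omega^{n-1}_V(2(n-1))\cong\sO_V(n+d-3)$ attached to a homogeneous polynomial $g$ of degree $n+d-3$ under the identification in force — the one built, exactly as for $\det(\Omega^1_{\mP^n|V}(2))=\Omega^n_{\mP^n|V}(2n)$, with denominator $Q^{n-1}$. Hence $\omega_i$ and $\sum_j A^i_jF_j$ coincide on the dense open set $U$, which gives $\omega_i=\sum_j A^i_jF_j$ in $H^0(V,\sO_V(n+d-3))$.

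The main thing to be careful about is the bookkeeping of signs and twists: one must check that the signs produced by reordering the $dz_k$, the $\text{sgn}(k-j)$ hidden inside the $M^i_{jk}$, and the denominator $Q^{n-1}$ combine so that no spurious sign or constant is left over — which is exactly what fixing consistently, once and for all, the normalizations of $\Omega^{n-1}_V(n)\cong\sO_V(d-1)$ and of $\Omega^{n-1}_V(2(n-1))\cong\sO_V(n+d-3)$ (as in the preceding subsections) is meant to handle. Equivalently, and with the signs more transparent, the computation can be carried out through the dual Euler sequence (\ref{euleroduale2}): there each $\tilde s_k$ is given by the linear tuple $(L^0_k,\dots,L^n_k)$, the section $\Xi_j$ restricts to $F_j$, and the claim then follows by $\sO_V$-linearity directly from (\ref{scrittura2}), exactly along the lines of the Remark above.
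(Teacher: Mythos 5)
Your proof is correct, and it reaches the identity by a more explicitly computational route than the paper. You restrict the expression (\ref{scrittura1}) for $\widetilde{\Omega_i}$ to an affine chart of $V$, eliminate $dz_1$ via $\sum_k f_k\,dz_k=0$, and use Euler's relation to recognize the resulting coefficient as $\bigl(\sum_j A^i_jF_j\bigr)(1,z)/(q^{n-1}f_1)$, which you then read off as the polynomial $\sum_j A^i_jF_j$ through the residue-type trivialization of $K_V\cong\sO_V(d-n-1)$; your sign bookkeeping and the Euler step check out. The paper instead exploits $\sO_V$-linearity in (\ref{scrittura2}) to reduce the whole statement to the single claim that $\Xi_j$ maps to $F_j$ under $\Omega^{n-1}_{\mP^n|V}(n)\to\sO_V(d-1)$, and proves that by dualizing to the Koszul map attached to $0\to\Theta_V\to\Theta_{\mP^n|V}\to\sO_V(d)\to0$, with only a small local verification at the end. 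The paper's reduction is shorter and keeps the identification canonical (it is the honest restriction map, so no choice of trivialization has to be matched), whereas your direct chart computation makes the mechanism — the conormal relation plus Euler's formula — completely explicit, at the cost of having to verify (as you do, and as is consistent with $\Xi_j\mapsto F_j$ on the chart) that the residue normalization you use to convert the form into a polynomial is the same one in force elsewhere; any discrepancy would only be a universal nonzero constant, harmless for the ideal-membership applications. Your closing remark via the dual Euler sequence and linearity is essentially the paper's argument, so the two approaches meet there.
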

\begin{proof}
It is enough to show that the image of $\Xi_j$ through the morphism $\Omega^{n-1}_{\mP^n|V}(n)\to \sO_V(d-1)$ is $F_j$.
Consider the exact sequence of the tangent sheaf of $V$:
\begin{equation}
0\to\Theta_V\to\Theta_{\mP^n|V}\to \sO_V(d)\to 0.
\end{equation} The beginning of the Koszul complex is
\begin{equation}
\bigwedge^n\Theta_{\mP^n|V}\otimes\sO_V(-d)\to\bigwedge^{n-1}\Theta_{\mP^n|V}
\end{equation} which, tensored by $\sO_V(-n)$, gives
\begin{equation}
\label{koszul}
\bigwedge^n\Theta_{\mP^n|V}\otimes\sO_V(-n-d)\to\bigwedge^{n-1}\Theta_{\mP^n|V}\otimes\sO_V(-n).
\end{equation} This is exactly the dual of $\Omega^{n-1}_{\mP^n|V}(n)\to \sO_V(d-1)$. Hence we only need to show that the morphism (\ref{koszul}) composed with the contraction by $\Xi_i$
\begin{equation}
\bigwedge^{n-1}\Theta_{\mP^n|V}\otimes\sO_V(-n)\stackrel{\Xi_i}{\rightarrow}\sO_V
\end{equation} is the multiplication by $F_i$. This is easy to see by a standard local computation.
\end{proof}
\begin{rmk}
We immediately have that the polynomials associated to the sections $\omega_i$ are in the Jacobian ideal of $V$.
\end{rmk}
The condition (\ref{aggiuntazero2}), that is 
\begin{equation}
\Omega \in \Ima(H^0(V,\sO_V(2))\otimes \lambda^nW \to H^0(V,\sO_V(n+d-1))),
\end{equation} can be written, modulo $F$, as
\begin{equation}
RP=\sum \omega_i\cdot S_i=\sum_{i,j} A^i_j\cdot F_j\cdot S_i,
\end{equation} where $\deg S_i=2$. In particular this implies that $RP$ is in the Jacobian ideal of $V$.

\begin{prop}
The base locus $D_W$ of the linear system $|\lambda^nW|$ is zero for the generic $W$.
\end{prop}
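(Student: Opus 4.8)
The goal is to show that for a generic choice of $n$-dimensional subspace $W \subset \ker(\partial_\xi) \subset H^0(V, \Omega^1_V(2))$, the fixed divisor $D_W$ of the adjoint sublinear system $|\lambda^n W| \subset \mP H^0(V, \sO_V(n+d-3))$ is zero. The approach is to translate the problem into one about polynomials. By the computation in the previous proposition, if $\mathcal{B} = \{\eta_1, \ldots, \eta_n\}$ is a basis of $W$, and each $\eta_k$ is represented (via the identification with meromorphic forms) through coefficient polynomials, then the associated sections $\omega_i = \lambda^{n-1}(\eta_1 \wedge \cdots \wedge \hat{\eta_i} \wedge \cdots \wedge \eta_n)$ correspond to the polynomials $\sum_j A^i_j F_j$ of degree $n+d-3$, where the $A^i_j$ (of degree $n-2$) are the appropriate minors of the coefficient matrix $(L^k_j)$ attached to the basis. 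Thus $D_W = 0$ is equivalent to saying that the polynomials $\{\sum_j A^i_j F_j\}_{i=1}^n$ have no common factor.

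The plan is to argue that the common-factor condition is a proper Zariski-closed condition on $W$ (equivalently on the parameter space of matrices $(L^k_j)$ modulo the relevant equivalences), and then to exhibit a single $W$ for which $D_W = 0$, which suffices by irreducibility of the parameter space. For the existence of a good $W$: one works on the dense open locus where $Q$ does not vanish, picks generic linear forms $L^k_j$ subject only to the constraint that the corresponding $1$-forms $\tilde{s}_k$ actually descend to sections of $\Omega^1_V(2)$ lying in $\ker(\partial_\xi)$ — concretely, that their restrictions to $V$ annihilate the normal direction, i.e. the vectors $(L^k_0, \ldots, L^k_n)$ satisfy $\sum_j L^k_j \xi_j \equiv 0$ modulo the relevant relations coming from the Euler sequence (\ref{euleroduale2}). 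Within this still-positive-dimensional linear family, one computes (or estimates the degree of vanishing of) the minors $A^i_j$ and checks that for generic choices the polynomials $\sum_j A^i_j F_j$ are coprime. A clean way to get coprimality is: if they had a common factor $G$ of positive degree, then $G$ would divide $\sum_j A^i_j F_j$ for all $i$; since the $F_j$ form a regular sequence (as $V$ is smooth), one can derive a contradiction by a dimension count on the zero locus $\{G = 0\} \cap V$ — the codimension of the common vanishing of all $\omega_i$ inside $V$ is too large for it to contain a divisor, as long as $W$ is chosen so that the $\omega_i$ are sufficiently general.

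The main obstacle I anticipate is controlling the interplay between the genericity of $W$ inside $\ker(\partial_\xi)$ — which is a proper subspace cut out by the deformation class $R$ — and the genericity of the minors $A^i_j$: one must be sure the linear constraints defining $\ker(\partial_\xi)$ do not force an unavoidable common factor on all the $\omega_i$. I would handle this by using that $\ker(\partial_\xi)$ still contains "enough" sections (its dimension is bounded below by $h^0(\Omega^1_V(2)) - h^1(V,\sO_V(2)) = h^0(\Omega^1_V(2))$ since $H^1(V,\sO_V(2)) = 0$ for $n \geq 3$, so in fact $\ker(\partial_\xi) = H^0(V,\Omega^1_V(2))$ entirely is not quite right — rather $\partial_\xi$ lands in $H^1(V,\sO_V(2))=0$, so indeed all global forms lie in $\ker(\partial_\xi)$), which removes the constraint altogether and reduces the problem to pure genericity of $n$ general global forms in $\Omega^1_V(2)$. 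Then one invokes the base-point-freeness or at least generic behavior of the linear system $|\Omega^1_V(2)|$ on the smooth hypersurface $V$ (which can be checked directly from the presentation via the restricted Euler sequence, since $\Omega^1_V(2)$ is globally generated off a locus of small dimension for $d > 2$, $n \geq 3$) to conclude that a generic wedge of $n-1$ of them has no fixed component. This last globally-generated statement is the technical heart, and I would prove it by the explicit description of $H^0(V,\Omega^1_V(2))$ coming from diagram (\ref{euleroduale2}) and a Bertini-type argument.
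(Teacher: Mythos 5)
Your reduction of the problem is sound as far as it goes: since $H^1(V,\sO_V(2))=0$ for $n\geq 3$, the constraint $W\subset\ker\partial_\xi$ is vacuous, and the statement becomes one about a generic $n$-dimensional subspace $W$ of $H^0(V,\Omega^1_V(2))$; also, the locus of $W$ for which the $\omega_i$ acquire a common divisor is indeed closed, so one good $W$ (or a genericity lemma) would suffice. The problem is that the two steps that actually carry the proof are never performed. You never exhibit a single $W$ with $D_W=0$, and the alternative route you sketch --- ``$\Omega^1_V(2)$ is globally generated off a small locus, then a Bertini-type argument gives that a generic wedge has no fixed component'' --- is precisely the technical heart, which you acknowledge but defer (``I would prove it by\dots''). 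Moreover the coprimality argument via a dimension count is circular as stated: it concludes ``as long as $W$ is chosen so that the $\omega_i$ are sufficiently general,'' which is exactly what has to be established. Note also that generic generation of the sheaf alone does not give the conclusion: even if $\Omega^1_V(2)$ is generated at a general point, the non-generation locus of a \emph{fixed} $n$-dimensional $W$ is a degeneracy locus of expected codimension $2$, and one must argue that for generic $W$ this expected codimension is attained (or otherwise exclude a divisorial component); this passage from the full space of sections to a generic small subspace is a genuine lemma, not an application of classical Bertini.

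The paper fills both gaps concretely. It invokes \cite{PZ}[Proposition 3.1.6], which reduces the generic-$W$ statement to two verifiable conditions on the \emph{full} space $H^0(V,\Omega^1_V(2))$: that it generically generates $\Omega^1_V(2)$, and that the adjoint system of the full space has no fixed divisor. The first is immediate from the explicit basis $\bigl(\xi_i\,d\xi_j-\xi_j\,d\xi_i\bigr)/Q$ (the top wedge is not identically zero). The second is the computation your sketch lacks: one checks that $\lambda^{n}H^0(V,\Omega^1_V(2))$ contains all polynomials $\xi_{i_1}\cdots\xi_{i_{n-2}}\,\partial F/\partial\xi_j$, and smoothness of $V$ guarantees these have no common divisorial zero, hence $D_{H^0(V,\Omega^1_V(2))}=0$. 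Either supply an argument of this type (an explicit identification of enough elements of the wedge image, plus smoothness), or prove the generic-subspace degeneracy statement you are implicitly using; without one of these your proposal remains a plan rather than a proof.
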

\begin{proof}
By \cite{PZ}[Proposition 3.1.6] it is enough to prove that $H^0(V,\Omega^1_V(2))$ generically generates the sheaf $\Omega^1_V(2)$ and that $D_{H^0(V,\Omega^1_V(2))}=0$. We have an explicit basis for $H^0(V,\Omega^1_V(2))$ given by
\begin{equation}
\frac{\xi_id\xi_j-\xi_jd\xi_i}{Q}
\label{baseforme}
\end{equation} where $i<j$ and $\deg Q=2$.
The vector space $\lambda^nH^0(V,\Omega^1_V(2))\subset H^0(V,\sO_V(n+d-3))$ is obviously nonzero, hence $H^0(V,\Omega^1_V(2))$ generically generates the sheaf $\Omega^1_V(2)$.

It remains to prove that $D_{H^0(V,\Omega^1_V(2))}=0$. An easy computation (for example by induction) shows that $\lambda^nH^0(V,\Omega^1_V(2))$ contains all the polynomials of the form
\begin{equation}
\xi_{i_{1}}\xi_{i_{2}}\ldots\xi_{i_{n-2}} \frac{\partial F}{\partial{\xi_j}}
\end{equation} where $\{i_1,\ldots,i_{n-2}\}\subset \{1,\ldots,n+1\}$ and $j\notin\{i_1,\ldots,i_{n-2}\}$. Since $V$ is smooth, these polynomials do not vanish simultaneously on a divisor, hence $D_{H^0(V,\Omega^1_V(2))}=0$, and we are done.
\end{proof}

\subsection{On Griffiths's proof of infinitesimal Torelli Theorem}
In this section we will prove Theorem [C] of the Introduction.

It is well known by \cite{Gri1} that the deformation $\xi$ is trivial if and only if $R$ lies in the Jacobian ideal $\sJ$ of the variety $V$.
The following lemma gives a translation of this condition in the setting of adjoint forms.
\begin{lem}
\label{jacobiano}
$R$ is in the Jacobian ideal $\sJ$ if and only if $\Omega \in \Ima(H^0(X,\sO_V(2))\otimes \lambda^nW \to H^0(X,\sO_V(n+d-1)))$ for the generic $\Omega$.
\end{lem}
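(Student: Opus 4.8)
The plan is to combine three ingredients already developed in the paper: the explicit description of the canonical adjoint $\Omega$ as $PR$ modulo $F$ (Proposition \ref{moltiplicazioneR} and the subsequent discussion), the identification of the $\omega_i$ with the combinations $\sum_j A^i_j F_j$ (so that $\lambda^nW$ lands in the Jacobian ideal), and the fact that for generic $W$ one has $D_W=0$, which lets us apply Corollary \ref{aggiuntaconbasezero} with $\sL=\sO_V(2)$. The point of the lemma is to translate Griffiths's criterion ``$\xi=0 \iff R\in\sJ$'' into the condition $\Omega\in\Ima(H^0(V,\sO_V(2))\otimes\lambda^nW\to H^0(V,\sO_V(n+d-1)))$.

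First I would prove the direction $\xi=0 \Rightarrow R\in\sJ$ is not what is needed here; rather, invoke Griffiths directly: $\xi=0$ iff $R\in\sJ$. Then I would run Corollary \ref{aggiuntaconbasezero}: since for generic $W$ the fixed divisor $D_W$ vanishes, that corollary says $\xi=0$ iff $\Omega\in\Ima(H^0(V,\sO_V(2))\otimes\lambda^nW\to H^0(V,\sO_V(n+d-1)))$ for that generic $W$. Chaining these two equivalences gives: $R\in\sJ$ iff $\Omega\in\Ima(\cdots)$ for generic $W$, which is precisely the statement. One must be slightly careful about quantifiers: ``for the generic $\Omega$'' means for $W$ in a dense open subset of the relevant Grassmannian and any adjoint attached to it (recall the adjoint is determined up to the action described in the remarks and up to elements of $\lambda^nW$, none of which affects membership in $\Ima(\cdots)$); since the condition $D_W=0$ already holds on a dense open set, the intersection of the two dense opens is again dense, so ``generic'' is consistent throughout.

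The only real subtlety — and the step I expect to require the most care in writing — is matching the statement of Corollary \ref{aggiuntaconbasezero}, which is phrased for a general extension $0\to\sL\to\sE\to\sF\to0$, to the present situation where $\sF=\Omega^1_V(2)$, $\sL=\sO_V(2)$, $\sE=\Omega^1_{\sV|V}(2)$, $n=\rank\sF=\dim V$, and $\det\sE=\sO_V(n+d-1)$ by \eqref{isom}. One needs $n+1=\dim W$ with $W\subset\ker(\partial_\xi)$; here $\partial_\xi\colon H^0(V,\Omega^1_V(2))\to H^1(V,\sO_V(2))$, and since $n\ge3$ forces $H^1(V,\sO_V(2))=0$ (used repeatedly above), every $(n{+}1)$-dimensional $W\subset H^0(V,\Omega^1_V(2))$ automatically satisfies $W\subset\ker\partial_\xi$ — one just needs $h^0(\Omega^1_V(2))\ge n+1$, which is the standard computation recalled before \eqref{sequenzacon2}. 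With this dictionary in place, and the genericity of $D_W=0$ from the preceding proposition, the lemma follows immediately from Corollary \ref{aggiuntaconbasezero} together with Griffiths's theorem.
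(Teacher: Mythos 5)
Your proposal is correct and follows essentially the same route as the paper: the paper's proof applies Theorem [A] (Adjoint Theorem) for one implication and Theorem \ref{inversoaggiunta} for the other, together with the genericity of $D_W=0$ and Griffiths's criterion $\xi=0\iff R\in\sJ$, which is exactly what you do by invoking Corollary \ref{aggiuntaconbasezero} (that corollary being precisely the combination of those two theorems when $D_W=0$). Only a notational caution: in the hypersurface setting $\rank\sF=\dim V=n-1$ (with $n$ the ambient dimension) and $\dim W=n$, so the general theory's ``$n$'' is the hypersurface section's $n-1$; this does not affect your argument.
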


\begin{proof}
If $\Omega \in \Ima(H^0(X,\sO_V(2))\otimes \lambda^nW \to H^0(X,\sO_V(n+d-1)))$, then by the Adjoint Theorem, $\xi_{D_W}=0$. Since $D_W=0$, the deformation is trivial, hence $R$ lies in the Jacobian ideal.

Viceversa if $R\in \sJ$, the deformation is trivial and by theorem (\ref{inversoaggiunta}), we have that $\Omega \in \Ima(H^0(X,\sO_V(2))\otimes \lambda^nW \to H^0(X,\sO_V(n+d-1)))$
\end{proof}

Our theory gives another  characterization for $[R]\in( \mC[\xi_0,\ldots,\xi_n]/\sJ)_{d}\simeq H^1(X,\Theta_X)$ to be trivial.
\begin{prop}\label{dachiamare}
Assume that $\deg R=d>3$. Then $R$ is in the Jacobian ideal $\sJ$ if and only if $RP\in \sJ$ for every polynomial $P\in H^0(V,\sO_V(n-1))$ corresponding to a generalized adjoint $\widetilde{\Omega}\in H^0(V,\Omega^n_{\mP^n|V}(2n))$. 
\end{prop}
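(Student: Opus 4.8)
The plan is to reduce Proposition \ref{dachiamare} to Lemma \ref{jacobiano} together with Proposition \ref{moltiplicazioneR}. Recall from the previous subsection that a canonical choice of generalized adjoint form $\Omega\in H^0(V,\sO_V(n+d-1))$ for $W=\langle\eta_1,\ldots,\eta_n\rangle$ and $\xi$ is the polynomial $PR$ (modulo $F$), where $P\in H^0(V,\sO_V(n-1))$ is the polynomial attached to the corresponding adjoint $\widetilde{\Omega}\in H^0(V,\Omega^n_{\mP^n|V}(2n))$ for the embedding sequence (\ref{sequenzaimmersione}); here the hypothesis $d>2$ guarantees that the liftings $\tilde{s_i}$, and hence $\widetilde{\Omega}$ and $P$, are unique. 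Thus the condition ``$RP\in\sJ$ for every such $P$'' is, after rephrasing via $H^0(V,\sO_V(n-1))\cong H^0(V,\Omega^n_{\mP^n|V}(2n))$, exactly the condition ``the generic generalized adjoint $\Omega$ lies in $\sJ$''.

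First I would prove the ``only if'' direction. Assume $R\in\sJ$. By Lemma \ref{jacobiano} (using that $D_W=0$ for generic $W$, proved in the preceding proposition), the deformation $\xi$ is trivial and $\Omega\in\Ima(H^0(V,\sO_V(2))\otimes\lambda^nW\to H^0(V,\sO_V(n+d-1)))$ for the generic $\Omega$. Writing this out modulo $F$ as $RP=\sum_i\omega_i\cdot S_i$ with $\deg S_i=2$, and invoking the Remark that each $\omega_i$ lies in the Jacobian ideal (indeed $\omega_i=\sum_j A^i_j F_j$), we get $RP\in\sJ$. One must check this holds for \emph{every} $P$ coming from an adjoint $\widetilde{\Omega}$, not merely the generic one: this is where I would use that varying $W=\langle\eta_1,\ldots,\eta_n\rangle$ over all choices, the associated polynomials $P$ sweep out a linear system whose generic member already forces membership in $\sJ$, and since $\sJ$ is a linear subspace of $H^0(V,\sO_V(n+d-1))$ closed under specialization, $RP\in\sJ$ for all $P$ in the closure of that system, hence for all $P$ attached to some $\widetilde{\Omega}$. (A cleaner route: the set of $P$ with $RP\notin\sJ$ is open, and if it were nonempty it would contain the generic $P$, contradicting Lemma \ref{jacobiano}.)

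For the converse, assume $RP\in\sJ$ for every $P\in H^0(V,\sO_V(n-1))$ coming from an adjoint $\widetilde{\Omega}$. In particular this holds for the generic such $P$, so the generic generalized adjoint $\Omega=PR$ lies in $\sJ$. I claim this implies $\Omega\in\Ima(H^0(V,\sO_V(2))\otimes\lambda^nW\to H^0(V,\sO_V(n+d-1)))$ for the generic $W$, which by Lemma \ref{jacobiano} gives $R\in\sJ$. The point is that, by Macaulay's theorem on the structure of the Artinian Gorenstein ring $\mC[\xi_0,\ldots,\xi_n]/\sJ$, in the degree range $n+d-1$ the multiplication pairings are as large as possible; concretely, since $RP\in\sJ$ means $RP=\sum_j G_j F_j$ with $\deg G_j=n+d-1-(d-1)=n$, and since the $\omega_i=\sum_j A^i_j F_j$ together with the freedom in $S_i\in H^0(V,\sO_V(2))$ already generate $\sJ_{n+d-1}\cap(R\cdot H^0(V,\sO_V(n-1)))$ modulo $F$, one recovers $RP=\sum_i\omega_i S_i$. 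I expect \textbf{this last step to be the main obstacle}: one needs that the representation $RP\in\sJ$ can be taken with the ``first factor'' being one of the canonical $\omega_i$'s times a degree-$2$ polynomial, rather than an arbitrary element of $\sJ$ in that degree. This is precisely the content of the interplay between the explicit description $\omega_i=\sum_j A^i_j F_j$, the explicit basis (\ref{baseforme}) of $H^0(V,\Omega^1_V(2))$, and the hypothesis $d>3$ (which is what makes the relevant graded pieces of $\mC[\xi]/\sJ$ behave well and ensures the degree-$2$ twist $\sO_V(2)$ supplies enough sections); I would carry it out by the same explicit polynomial bookkeeping, combined with the genericity of $W$, used in the proof of the preceding proposition.
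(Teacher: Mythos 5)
Your ``only if'' direction is correct but needlessly elaborate: since $\sJ$ is an ideal, $R\in\sJ$ gives $RP\in\sJ$ for every $P$ at once, with no appeal to Lemma \ref{jacobiano}, genericity, or openness; the paper simply calls this implication trivial.

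The converse is where the real content lies, and there your proposal has a genuine gap exactly at the step you flag. You keep only the generic $P$ (discarding the full hypothesis ``for every $P$'') and then need to upgrade the abstract membership $RP\in\sJ$ to the much stronger statement $RP\in \Ima\bigl(H^0(V,\sO_V(2))\otimes\lambda^nW\to H^0(V,\sO_V(n+d-1))\bigr)$, i.e.\ a representation whose first factors are the specific polynomials $\omega_i=\sum_j A^i_jF_j$ attached to $W$, so that Lemma \ref{jacobiano} applies. Nothing you invoke delivers this: Macaulay--Gorenstein duality controls the multiplication pairings between graded pieces of $\mC[\xi_0,\ldots,\xi_n]/\sJ$, not which generators and cofactors can be used to represent a given element of $\sJ_{n+d-1}$, and establishing that upgrade is essentially equivalent to the implication you are trying to prove, so the argument is circular as it stands. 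The paper's proof takes a different route that uses the hypothesis for \emph{all} adjoint-type $P$: it shows, by an explicit induction on $n$ (base case $n=2$, using the forms $(\xi_id\xi_j-\xi_jd\xi_i)/Q$ and wedging with one extra such form at each step), that \emph{every monomial} of degree $n-1$ is the polynomial $P$ of some generalized adjoint $\widetilde{\Omega}\in H^0(V,\Omega^n_{\mP^n|V}(2n))$. Hence the hypothesis gives $R\cdot H^0(V,\sO_V(n-1))\subset\sJ$, and Macaulay's theorem (\cite{Vo2}, Theorem 6.19 and Corollary 6.20) then yields $R\in\sJ$ directly --- this is precisely where $d>3$ enters, keeping the degree $d+n-1$ within the range governed by the socle of the Jacobian ring --- with no detour through Lemma \ref{jacobiano} or the image condition at all. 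That surjectivity-onto-monomials statement is the missing key idea in your proposal; without it (or some substitute showing that the $P$'s coming from adjoints span enough of $H^0(V,\sO_V(n-1))$ to pair nondegenerately with $R$ modulo $\sJ$), the proof does not close.
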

\begin{proof}
One implication is trivial.

To prove the other one the idea is to show that every monomial of $H^0(V,\sO_V(n-1))$ corresponds to a suitable generalized adjoint. Hence, if $RP\in \sJ$ for every polynomial $P\in H^0(V,\sO_V(n-1))$ corresponding to a generalized adjoint, we have that $R\cdot H^0(V,\sO_V(n-1))\subset \sJ$ and we are done by Macaulay Theorem (c.f. \cite{Vo2} Theorem 6.19 and Corollary 6.20).

We work by induction at the level of $\mP^n$, since $H^0(\mP^n,\sO_{\mP^n}(n-1))\to H^0(V,\sO_V(n-1))$ is surjective.
The base of the induction is for $n=2$. A simple computation shows that the map
\begin{equation}
\bigwedge^2H^0(\mP^2,\Omega^1_{\mP^2}(2))\to H^0(\mP^2,\sO_{\mP^2}(1))
\end{equation} is surjective because its image contains the canonical basis of degree one monomials.

For the general case we show that every monomial of degree $n-1$ is given by a generalized adjoint. Consider the natural homomorphism:
\begin{equation}
\bigwedge^nH^0(\mP^n,\Omega^1_{\mP^n}(2))\to H^0(\mP^n,\sO_{\mP^n}(n-1))
\end{equation} and take a monomial $M$ with $\deg M=n-1$. There is a variable $\xi_i$ which does not appear in $M$. We restrict to the hyperplane $\xi_i=0$ and we use induction on $\frac{M}{\xi_j}$, where $\xi_j$ appears in $M$. There exist $s_1,\ldots,s_{n-1}\in H^0(\mP^n,\Omega^1_{\mP^{n-1}}(2))$ with $s_1\wedge\ldots\wedge s_{n-1}$
which corresponds to $\frac{M}{\xi_j}$, that is 
\begin{equation}
s_1\wedge\ldots\wedge s_{n-1}=\frac{M\Psi'}{\xi_j\cdot Q^{n-1}}
\end{equation} where $\Psi'=\sum_{k=0,k\neq i}^n(-1)^k\xi_k(d\xi_0\wedge\ldots\wedge\hat{d\xi_i}\ldots\wedge\hat{d\xi_k}\ldots\wedge d\xi_n)$ gives a basis of $H^0(\mP^{n-1},\Omega_{\mP^{n-1}}^{n-1}(n))$. It is easy to see that 
\begin{equation}
s_1\wedge\ldots\wedge s_{n-1}\wedge\frac{(\xi_jd\xi_i-\xi_id\xi_j)}{Q}=\frac{M\Psi}{Q^{n}},
\end{equation} i.e. $M$ corresponds to a generalized adjoint, which is exactly our thesis.
\end{proof}

From the previous results we deduce immediately Theorem [C] of the Introduction.

\end{document}